\newtheorem{theorem}{Theorem}[section]
\newtheorem*{thmA}{Theorem~ A}
\newtheorem*{corB}{Corollary~ B}
\newtheorem{lem}[theorem]{Lemma}
\theoremstyle{remark}
\DeclareMathOperator{\Gal}{Gal}
\DeclareMathOperator{\Aut}{Aut}
\DeclareMathOperator{\Irr}{Irr}
\DeclareMathOperator{\GL}{GL}
\DeclareMathOperator{\PSL}{PSL}
\newcommand{\cd}{{\mathrm {cd}}}
\renewcommand{\mod}{\bmod \,}
\numberwithin{equation}{section}
\newcommand{\Out}{{\mathrm {Out}}}
\begin{document}

\title[Character degrees of extensions of $^2B_2(q^2)$]{Character degrees of extensions of the Suzuki groups $^2B_2(q^2)$}

\author[M.~Ghaffarzadeh]{Mehdi Ghaffarzadeh}
\address{Department of Mathematics, Khoy Branch, Islamic Azad University, Khoy, Iran}
\email{gh.ghaffarzadeh@iaukhoy.ac.ir}

\thanks{}

\subjclass[2010]{Primary 20C15; Secondary 20D05}


\date{April 14, 2016}

\keywords{Character degree; Suzuki group}

\begin{abstract}
Let $S$ be a Suzuki group $^2B_2(q^2)$, where $q^2=2^{2f+1}$, $f\geqslant 1$. In this paper, we determine the degrees of the ordinary complex  irreducible  characters of every group $G$ such that $S\leqslant G\leqslant \Aut(S)$.
\end{abstract}

\maketitle
\section{Introduction}

Given a finite group $G$, let $\cd(G)=\{\chi(1) \mid \chi\in\Irr(G)\}$ be the set of degrees of the ordinary complex irreducible characters of $G$. A common approach in the studying of nonsolvable groups with a given property on irreducible character degrees begins by examining the property on simple and almost simple groups. Among these, depending on the given property, the most work is done generally on simple groups $S$ with few character degrees and even more so, on groups $G$ with $S\leqslant G\leqslant \Aut(S)$ such that a few characters of $S$ are extendible to $G$, (see \cite{HZ, LWodd, MMnon}). 

The $2$-dimensional projective special linear groups $\PSL_2(q)$, $q>3$ with $4$ or $5$ degrees and the Suzuki groups $^2B_2(q^2)$ with $6$ degrees are two families of nonabelian simple groups which have the least number of character degrees. In addition, if $S$ is a simple group of Lie type, then by Theorems 2.4 and 2.5 of \cite{Mal}, all unipotent characters of $S$ apart from  a few cases extend to $\Aut(S)$. By using \cite[Chapter  13]{Carter} and \cite{Mal} for  simple groups of Lie type and the Atlas \cite{Atlas} and hook-partitions for sporadic and alternating groups, it follows that the families $\PSL_2(q)$ and $^2B_2(q^2)$ are among the nonabelian simple groups with a small number of degrees extendible to $\Aut(S)$. All of these emphasize to have an explicit result on the set of degrees of the irreducible characters of every almost simple group $G$ for which $S=\PSL_2(q)$ or $^2B_2(q^2)$. For $S=\PSL_2(q)$, this has been done in \cite{White}. In this paper, we consider the Suzuki groups $^2B_2(q^2)$, where $q^2=2^{2f+1}$, $f\geqslant 1$.

We will use the notation of \cite{Suz} for the Suzuki groups,  in which instead of $q$ we write $q^2$. By Theorem 5 of \cite{Suz}, the character degree set of $^2B_2(q^2)$ is 
\[\cd(^2B_2(q^2))=\{1, q^4, q^4+1, (q^2-r+1)(q^2-1), (q^2+r+1)(q^2-1), r(q^2-1)/2\},\] where $ r=2^{f+1}$.

Our main result is the following:

\begin{thmA}\label{main}
Let $S=  {^2B_2(q^2)}$, where $q^2=2^{2f+1}$, $f\geqslant 1$ and let $S\leqslant G\leqslant \Aut(S)$ with $|G : S|=d$. Then the set of irreducible character degrees of $G$ is 
\[\cd(G)=\{1, q^4, r(q^2-1)/2\}\cup\{(q^4+1)a, (q^2-r+1)(q^2-1)b, (q^2+r+1)(q^2-1)c : a, b, c\mid d\},\]
with the following exceptions:
\begin{itemize}
\item[{\rm (i)}] If $G=\Aut(S)$, then $a\not=1$,
\item[{\rm (ii)}] If $f\equiv 1$ or $2\,\, (\mod 4)$, and $G=\Aut(S)$, then $b\neq 1$ and $c\neq 3$, 
\item[{\rm (iii)}] If $f\equiv 0$ or $3 \,\,(\mod 4)$, and $G=\Aut(S)$, then $b\neq 3$ and $c\neq 1$.
\end{itemize}
\end{thmA}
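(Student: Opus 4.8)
The plan is to run ordinary Clifford theory relative to the normal subgroup $S\trianglelefteq G$. By \cite{Suz}, the outer automorphism group $\Out(S)$ is cyclic of order $2f+1$, generated by the field automorphism $\phi$; hence $G=S\langle\phi^{(2f+1)/d}\rangle$ and $G/S$ is cyclic of order $d$. Since $G/S$ is cyclic, every $G$-invariant $\chi\in\Irr(S)$ extends to $G$, and by Gallagher's theorem there are exactly $d$ irreducible characters of $G$ over $\chi$, all of degree $\chi(1)$; if instead the $\langle\phi^{(2f+1)/d}\rangle$-orbit of $\chi$ has length $e>1$, with inertia group $I$ (so $[G:I]=e$), then $I/S$ is again cyclic, $\chi$ extends to $I$, and induction from $I$ produces $d/e$ irreducible characters of $G$ of degree $e\,\chi(1)$. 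Therefore
\[
\cd(G)=\bigl\{\,e\cdot\chi(1)\ :\ \chi\in\Irr(S),\ e=\text{length of the }\langle\phi^{(2f+1)/d}\rangle\text{-orbit of }\chi\,\bigr\},
\]
and the whole theorem reduces to computing, for every $d\mid 2f+1$, the orbit lengths of $C_d\le C_{2f+1}=\langle\phi\rangle$ on $\Irr(S)$.

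Next I would read the $\phi$-action off the character table of $S$ in \cite{Suz}. The trivial and the Steinberg character (degree $q^4=|S|_2$) are $\Aut(S)$-invariant, and the two characters of degree $r(q^2-1)/2$ are permuted through a group of order dividing $2$, hence—$2f+1$ being odd—are $\Aut(S)$-invariant as well; this already puts $1,q^4,r(q^2-1)/2$ in $\cd(G)$ for every $G$. The remaining characters form three families: the $\tfrac{q^2-2}{2}$ of degree $q^4+1$, indexed by the nontrivial characters of a cyclic torus of order $q^2-1$ modulo inversion; the $\tfrac{q^2+r}{4}$ of degree $(q^2-r+1)(q^2-1)$ and the $\tfrac{q^2-r}{4}$ of degree $(q^2+r+1)(q^2-1)$, indexed by the nontrivial characters of cyclic Coxeter tori of orders $q^2+r+1$ and $q^2-r+1$, modulo the respective Weyl quotients $C_4$ acting by $t\mapsto t^{q^2}$ (note $q^4\equiv-1$ on both tori, since $q^4+1=(q^2+r+1)(q^2-r+1)$, so $t\mapsto t^{q^2}$ indeed has order $4$). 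On the order-$(q^2-1)$ torus $\phi$ acts as $t\mapsto t^2$ (consistent with $2^{2f+1}\equiv1\pmod{q^2-1}$), and on each Coxeter torus as $t\mapsto t^{\mu_\pm}$ for a unit $\mu_\pm$ of order $2f+1$ modulo $q^2\pm r+1$, whose product group with $q^2$ is cyclic of order $4(2f+1)$ (the orders $2f+1$ and $4$ being coprime). Determining $\mu_\pm$ precisely is the delicate point.

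For the orbit count I would use that, for a cyclic group of order $m$ under multiplication by $2^{(2f+1)/d}$, the orbit of an element of order $m'\mid m$ has length $\mathrm{ord}_{m'}\!\bigl(2^{(2f+1)/d}\bigr)$; and, for the Coxeter tori, that passing to the $C_4$-quotient does not change these lengths, because $\mathrm{ord}_{m'}(\mu_\pm)$ divides the odd number $2f+1$ and hence $\langle\mu_\pm\rangle$ meets $\langle q^2\rangle$ trivially modulo $m'$. The number-theoretic inputs are that $q^2-1=2^{2f+1}-1$, and that $q^2+r+1=2^{2f}+(2^f+1)^2$ and $q^2-r+1=2^{2f}+(2^f-1)^2$ are sums of two coprime squares, hence products of primes $\equiv1\pmod4$ — exactly what makes the order-$4$ Weyl quotients act freely on the nontrivial characters, so that these families really have the stated cardinalities. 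Given $a\mid d$ with $\delta:=a(2f+1)/d>1$, Zsigmondy's theorem (applicable since $\delta\mid 2f+1$ is odd, so $\delta\ne6$) gives a prime $\ell$ with $\mathrm{ord}_\ell(2)=\delta$, and then $\ell\mid2^\delta-1\mid q^2-1$ yields a $C_d$-orbit of length $a$ among the degree-$(q^4+1)$ characters; the case $\delta=1$, i.e.\ $a=1$ and $d=2f+1$, is impossible (a nontrivial fixed element would need $m'\mid2-1$), which is precisely exception (i), while for a proper divisor $d$ one takes $m'=2^{(2f+1)/d}-1>1$ so $a=1$ is realized and no exception arises. The analogous bookkeeping for the two Coxeter families shows that for proper $d$ every $b,c\mid d$ is realized, and reduces the $G=\Aut(S)$ behaviour to the values $1$ and $3$: using $q^4\equiv-1$ one has $\prod_{j=0}^{3}(X-q^{2j})\equiv X^4-1$ modulo $q^2\pm r+1$, so a nontrivial character of order $m'$ in the $\pm$ family is $\Aut(S)$-invariant iff $m'\mid\mu_\pm^4-1$, which—since $\mathrm{ord}_{m'}(\mu_\pm)\mid\gcd(2f+1,4)=1$—amounts to $m'\mid\mu_\pm-1$; thus the existence of an invariant character is governed by $\gcd(\mu_\pm-1,\,q^2\pm r+1)$, and that of an orbit of length exactly $3$ by $\gcd(\mu_\pm^3-1,\,q^2\pm r+1)$ together with $\gcd(3,2f+1)$.

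I expect the crux — and the origin of the dichotomy in (ii)/(iii) — to be the exact identification of $\mu_\pm$, equivalently the placement of $\phi$ inside the cyclic group of order $4(2f+1)$ generated by $\mu_\pm$ and $q^2$ modulo $q^2\pm r+1$ (the Coxeter tori having orders that are the values at $q$ of $x^2\pm\sqrt2\,x+1$), followed by the elementary but careful number theory on $q^2\pm r+1$, in which the residue of $f$ modulo $4$ enters through the $4$-periodic behaviour of $2^{f+1}$ modulo the small prime divisors that can occur; one checks, for instance, that $q^2-r+1$ carries an $\Aut(S)$-invariant character while $q^2+r+1$ carries none exactly when $f\equiv0,3\pmod4$, and symmetrically for the size-$3$ orbits, the conditions being vacuous when $3\nmid2f+1$. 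Once all these orbit lengths are in hand for every $d\mid2f+1$, substituting them into the Clifford dictionary of the first paragraph produces $\cd(G)$ with precisely the three listed exceptions; the rest is routine.
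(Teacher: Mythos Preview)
Your Clifford-theoretic framework is exactly the paper's, and the treatment of the degrees $1$, $q^4$, $r(q^2-1)/2$ is the same. Your Zsigmondy argument for the $(q^4+1)$-family is a pleasant alternative to the paper's explicit choice $i=(q^2-1)/(2^n-1)$ (Lemma~3.1), and it works.

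The genuine gap is in the two Coxeter-torus families. You treat the $\phi$-action on the order-$(q^2\pm r+1)$ tori as multiplication by an unspecified unit $\mu_\pm$ of order $2f+1$, and defer its identification as ``the delicate point''. In fact there is nothing to identify: the field automorphism acts on \emph{every} maximal torus by $t\mapsto t^2$ (this is the paper's Lemma~2.1, a direct consequence of Suzuki's description of the conjugacy classes), so $\mu_\pm=2$. But $2$ has multiplicative order $4(2f+1)$ (or a divisor of it not dividing $2(2f+1)$) modulo $q^2\pm r+1$, not $2f+1$; your subsequent claim that $\mathrm{ord}_{m'}(\mu_\pm)\mid\gcd(2f+1,4)=1$, and hence that $\Aut(S)$-invariance reduces to $m'\mid\mu_\pm-1$, is therefore false. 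With $\mu_\pm=2$ your criterion would say no $Y_j$ or $Z_k$ is ever $\Aut(S)$-invariant, whereas for instance $Y_{(q^2+r+1)/5}$ is $\phi$-invariant whenever $f\equiv0$ or $3\pmod 4$.

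What actually happens is that $Y_j$ is $\varphi^n$-invariant iff $2^nj$ lies in the $C_4$-Weyl orbit $\{\pm j,\pm q^2j\}$ modulo $q^2+r+1$; the cases $2^nj\equiv\pm j$ are ruled out by $(q^4+1,2^n\pm1)=1$, leaving $q^2+r+1\mid(q^2\mp2^n)j$ (the paper's Lemma~3.2). The existence of a $Y_j$ with stabilizer exactly $\langle\varphi^n\rangle$ then hinges on the gcd's $(q^2+r+1,\,q^2\pm2^n)$, which the paper computes (Lemmas~2.4--2.6): they equal $2^n\pm2^{(n+1)/2}+1$ or $1$ according to the residue of $2f+1$ modulo $4n$, and the $f\bmod 4$ dichotomy in exceptions~(ii)/(iii) comes precisely from the coincidence $(q^2+r+1,q^2\pm8)=(q^2+r+1,q^2\mp2)=5$ at $n=3,\,m=1$ (Lemma~2.6). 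None of this is visible from your $\mu_\pm$ framework as stated; once you set $\mu_\pm=2$ and rewrite the invariance condition correctly, you will be led to exactly these gcd computations.
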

 As an immediate consequence of Theorem A, we obtain the following.
\begin{corB}\label{corB}
Let $S=  {^2B_2(q^2)}$, where $q^2=2^{2f+1}$, $f> 1$ and let $S< G\leqslant \Aut(S)$ with $|G : S|=d$. Then
 $|\cd(G)|\geqslant 7$, and if $d$ is not a  prime, then $|\cd(G)|\geqslant 9$. 
 \end{corB}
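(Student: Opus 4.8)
The plan is to read $\cd(G)$ off Theorem~A and count distinct values. Write
$\cD_a=\{(q^4+1)a\}$, $\cD_b=\{(q^2-r+1)(q^2-1)b\}$, $\cD_c=\{(q^2+r+1)(q^2-1)c\}$,
where $a,b,c$ run over the divisors of $d$ permitted by Theorem~A --- all of them when $S<G<\Aut(S)$, and those surviving the restrictions (i)--(iii) when $G=\Aut(S)$. First I would record the standard fact that $\Aut(S)/S$ is cyclic of order $2f+1$ (see \cite{Suz}), so $d\mid 2f+1$; hence $d$ is odd, $d\ge 3$ because $S<G$, and $d=2f+1\ge 5$ whenever $G=\Aut(S)$. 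This last point is the only place the hypothesis $f>1$ is used, and it is essential: for $f=1$ one has $G=\Aut(S)$ with $d=3$ and $|\cd(G)|=6$.

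Next I would separate the three isolated degrees from the three families. Because $q^2$ and $r$ are even, each of $q^4+1$, $(q^2-r+1)(q^2-1)$ and $(q^2+r+1)(q^2-1)$ is odd, and $a,b,c$ are odd divisors of $d$, so every element of $\cD_a\cup\cD_b\cup\cD_c$ is an odd integer $>1$. On the other hand $q^4=2^{4f+2}$ and $r(q^2-1)/2=2^f(q^2-1)$ are even (as $f\ge 1$), $1$ is too small, and $q^4$ is a power of $2$ while $r(q^2-1)/2$ is not; hence $1$, $q^4$, $r(q^2-1)/2$ are three pairwise distinct values lying outside $\cD_a\cup\cD_b\cup\cD_c$, and $|\cd(G)|=3+|\cD_a\cup\cD_b\cup\cD_c|$.

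The heart of the argument is that $\cD_a,\cD_b,\cD_c$ are pairwise disjoint. Since $q^4+1\equiv 2\pmod{q^2-1}$ with $q^2-1$ odd, $\gcd(q^4+1,q^2-1)=1$; since $(q^2+r+1)-(q^2-r+1)=2r$ with both terms odd, $\gcd(q^2-r+1,q^2+r+1)=1$. Thus an equality $(q^4+1)a=(q^2\mp r+1)(q^2-1)b$ would force $q^2-1\mid a$, and an equality $(q^2-r+1)(q^2-1)b=(q^2+r+1)(q^2-1)c$ would, after cancelling $q^2-1$, force $q^2+r+1\mid b$ --- both impossible, because every divisor of $d$ is at most $2f+1$, whereas $q^2-1$ and $q^2+r+1$ are both $\ge 2^{2f+1}-1>2f+1$. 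Since multiplication by a fixed constant is injective on the divisors of $d$, it follows that $|\cD_a\cup\cD_b\cup\cD_c|=|\cD_a|+|\cD_b|+|\cD_c|$, each summand being the number of admissible divisors in that family. I expect this step --- keeping the large factors $q^2-1$ and $q^2+r+1$ provably larger than every divisor of $d$, and getting the two gcd computations exactly right --- to be the one demanding the most care.

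It then remains to count, writing $\tau(d)$ for the number of positive divisors of $d$. If $S<G<\Aut(S)$ there are no restrictions and $|\cd(G)|=3+3\tau(d)\ge 9\ge 7$, and if in addition $d$ is composite then $\tau(d)\ge 3$ and $|\cd(G)|\ge 12$. If $G=\Aut(S)$, then (i) deletes the divisor $1$ from $\cD_a$, while exactly one of (ii), (iii) deletes the divisor $1$ from one of $\cD_b,\cD_c$ and the divisor $3$ from the other; hence $|\cD_a|+|\cD_b|+|\cD_c|$ equals $3\tau(d)-3$ if $3\mid d$ and $3\tau(d)-2$ if $3\nmid d$. When $d=2f+1$ is prime it is $\ge 5$, so $3\nmid d$ and $\tau(d)=2$, giving $|\cd(G)|=3+4=7$; when $d$ is composite, $\tau(d)\ge 3$, so $|\cd(G)|\ge 3+(3\cdot 3-3)=9$. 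This yields both conclusions of Corollary~B.
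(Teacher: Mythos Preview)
Your argument is correct and is exactly the unpacking of Theorem~A that the paper has in mind when it calls Corollary~B ``an immediate consequence'': separating the three fixed degrees from the three parametrized families, checking disjointness via the gcd computations and the size bound $a,b,c\le 2f+1<q^2-1$, and then counting divisors, with the case split $G=\Aut(S)$ versus $G<\Aut(S)$ handling the exceptions (i)--(iii). The only cosmetic point is that your disjointness check for $\cD_a$ against $\cD_b,\cD_c$ could alternatively exploit $q^4+1=(q^2-r+1)(q^2+r+1)$ directly, but your route through $\gcd(q^4+1,q^2-1)=1$ works just as well.
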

For a normal subgroup $S$ of a finite group $G$, we denote by $\cd(G\mid S)$, the set of degrees of irreducible characters of $G$ whose kernels do not contain $S$. Observe that Corollary B can be used to improve the conclusions of \cite{HZ}, on finite groups $G$ with a nonsolvable normal subgroup $S$ such that $|\cd(G\mid S)|\leqslant 5$. See \cite[Theorems ~ A, B and Corollary C]{HZ}.

\medskip
At the end of this section, we sketch our proof of Theorem A as follows. First, note that $S\unlhd G$, so 
\[\cd(G)=\bigcup_{\theta\in\Irr(S)}\cd(G\mid \theta),\]
in which $\cd(G\mid\theta)=\{\chi(1) \mid \chi\in\Irr(G), [\chi_S, \theta]\neq 0\}$. Also, the group of outer automorphisms   of $S$ is
 cyclic. Thus by \cite[Corollary~11.22]{Isa}, every $\theta\in\Irr(S)$ extends to its stabilizer $I_G(\theta)$ in $G$ and so  Gallagher's theorem \cite[Corollary~ 6.17]{Isa} yields that $\cd(I_G(\theta)\mid \theta)=\{\theta(1)\}$. 
 Using Clifford's theorem \cite[Theorem~ 6.11]{Isa}, we obtain $\cd(G\mid \theta)=\{|G : I_G(\theta)|\theta(1)\}$. So what is important here, is to find the stabilizer of any $\theta\in\Irr(S)$ in $G$. To do this we need several number theoretic results on divisors of $q^4+1$ and $q^2\pm r+1$, which will be provided in section \ref{sec2}.


\section{Preliminaries}\label{sec2}
We first establish some notation which will remain fixed throughout this paper. Simultaneously, we mention some facts from \cite{Suz} about the Suzuki groups which will be needed in this paper. Let $f$ be a positive integer, let $q^2=2^{2f+1}$ and $r=2^{f+1}$. We will denote the Suzuki group $^2B_2(q^2)$ by $S$. Note that $S$  is a simple group of order $(q^4+1)q^4(q^2-1)$ and its order is not divisible by $3$.  We may regard $S$ as a subgroup of $\GL_4(q^2)$. Using the notation of \cite{Suz}, we know that $S$ has $q^2+3$ conjugacy classes of elements, consisting of a class of involutions with a representative $\sigma$, two classes of elements of order $4$ with representatives $\rho$, $\rho^{-1}$ and $q^2$ classes of semisimple elements. Also, by \cite[Theorem~ 4]{Suz}, any  semisimple element of $S$ is conjugate to an element of the cyclic subgroups $A_0$, $A_1$, $A_2$ of $S$.

\medskip
In order to determine which subgroups of $\Aut(S)$ appear as stabilizer of irreducible characters of $S$, we first consider the action of outer automorphisms of $S$ on conjugacy classes. According to \cite[Theorem~ 11]{Suz}, the group of outer automorphisms of $S$ is cyclic of odd order $2f+1$ and is generated by a field automorphism. To be more precise, let $\mathcal{G}=\Gal({{\mathbb{F}}_{q^2}}/{\mathbb{F}_2})$. Note that $\mathcal{G}$ is generated by the Frobenius automorphism $\overline{\varphi}$, where $\overline{\varphi}(\zeta)={\zeta}^2$ for all $\zeta\in\mathcal{G}$. Then $\Out(S)$ is generated by the outer automorphism $\varphi$ of $S$ induced by $\overline{\varphi}$. By \cite{Suz}, we obtain the following lemma on the action of $\varphi$ on conjugacy classes of $S$. 

\begin{lem}\label{2-1}
Assume notation as above and let $n$ be an integer. The automorphism $\varphi$ fixes the conjugacy classes of $\sigma$, $\rho$, $\rho^{-1}$ in $S$. Also, if $x$ is a semisimple element of $S$, then $\varphi$ sends the conjugacy class of $x$ to the class of $x^2$, and $\varphi^n$ sends the class of $x$ to the class of $x^{2^n}$.
\end{lem}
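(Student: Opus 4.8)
The plan is to read off everything from the explicit description of conjugacy classes and characters of $S$ given in \cite{Suz}, together with the fact that outer automorphisms of $S$ are induced by field automorphisms of $\bbF_{q^2}$. First I would recall from \cite[Theorem~7]{Suz} the explicit generators: $S$ is generated by matrices whose entries are polynomials over $\bbF_{q^2}$ in the parameters labelling the elements, and the field automorphism $\overline{\varphi}\colon \zeta\mapsto \zeta^2$ of $\bbF_{q^2}$ acts on $S$ entrywise, inducing $\varphi\in\Aut(S)$. Since $\varphi$ acts on a matrix by raising every entry to the power $2$, for any element $x=x(\text{parameters})$ we have $\varphi(x)=x(\text{parameters}^2)$ up to $S$-conjugacy; iterating gives the $\varphi^n$ statement once the $\varphi$ statement is established.

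For the three classes of $\sigma$, $\rho$, $\rho^{-1}$: the involution $\sigma$ is the unique class of involutions in $S$ (it is the centre of a Sylow $2$-subgroup situation), hence characteristic, so $\varphi$ must fix it. For $\rho$ and $\rho^{-1}$, I would use the description in \cite{Suz} that these are the two classes of elements of order $4$; a priori $\varphi$ either fixes both or swaps them. To rule out the swap I would invoke Lemma~\ref{2-1}'s companion fact that $|\Out(S)|=2f+1$ is \emph{odd}: an element of odd order acting on a two-element set must fix both points. (Alternatively, one checks directly from the matrix generators that the order-$4$ element used in \cite{Suz} has entries fixed by $\overline\varphi$ up to the relevant normalisation, or that $\rho$ and $\rho^{-1}=\rho^{3}$ lie in a cyclic group of order $4$ on which $\varphi$ acts; since $\varphi$ is a field automorphism it commutes with inversion only in a controlled way — but the parity argument is cleanest.)

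For the semisimple classes, the key point from \cite[Theorem~4]{Suz} is that every semisimple element is conjugate into one of the cyclic tori $A_0,A_1,A_2$, and on a semisimple element $x$ the field automorphism acts by the Frobenius, i.e.\ eigenvalue-wise squaring, so $\varphi(x)$ is $S$-conjugate to $x^{2}$. Concretely, in \cite{Suz} the semisimple classes are parametrised by certain roots of unity (elements of $\bbF_{q^2}$ or of a quadratic/quartic extension lying in the tori of orders $q^2-1$, $q^2-r+1$, $q^2+r+1$), and $\overline\varphi$ sends a parameter $\zeta$ to $\zeta^{2}$; unwinding the parametrisation of the class labelled by $\zeta$ shows its image under $\varphi$ is the class labelled by $\zeta^{2}$, which is exactly the class of $x^{2}$. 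Then $\varphi^{n}$ sends the class of $x$ to the class of $x^{2^{n}}$ by induction on $n$, using that squaring commutes with itself.

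The main obstacle is bookkeeping rather than conceptual: one must match the abstract ``field automorphism'' $\varphi$ with its effect on Suzuki's explicit parametrisation of classes and make sure the two order-$4$ classes really are individually fixed rather than interchanged — here the oddness of $2f+1$ does the decisive work. No step requires a genuinely new idea beyond carefully transcribing \cite[Theorems~4, 7 and 11]{Suz}.
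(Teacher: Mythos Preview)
Your argument is correct. Note, however, that the paper does not actually give a proof of this lemma: it simply states that the result follows from \cite{Suz} and moves on. What you have written is therefore not an alternative to the paper's proof but rather a fleshing-out of the citation --- you supply the details that the paper leaves implicit.

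Your parity argument for the classes of $\rho$ and $\rho^{-1}$ is clean and sufficient. One could also observe directly that in Suzuki's explicit matrices the representatives $\sigma$, $\rho$, $\rho^{-1}$ have entries in $\bbF_2$ and are therefore literally fixed by the entrywise Frobenius, but your approach via the oddness of $|\Out(S)|=2f+1$ avoids having to open \cite{Suz} to check this. For the semisimple classes your reasoning is the standard one: on a torus the entrywise Frobenius coincides (up to $S$-conjugacy) with the squaring map, and Suzuki's parametrisation of the classes by torus elements makes this transparent. The induction for $\varphi^n$ is immediate.

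In short: nothing is wrong, and nothing differs from the paper except that you have written out what the paper leaves as an exercise in reading \cite{Suz}.
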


We now consider the action of the field automorphism $\varphi$ on the irreducible characters of $S$. By \cite[Section~ 17]{Suz}, the nonlinear irreducible characters of $S$ along with their degrees and the number of characters of each degree are as in the following table: 
\begin{center}
\begin{tabular}{c@{ \hskip 0.5in }c@{\hskip 0.5in}c}
 $X$ &  $q^4$ & $1$ \\ 
 $X_i$ &  $q^4+1$ & $q^2/2-1$ \\ 
 $Y_j$ &  $(q^2-r+1)(q^2-1)$ & $(q^2+r)/4$ \\ 
 $Z_k$ &  $(q^2+r+1)(q^2-1)$ & $(q^2-r)/4$ \\ 
 $W_l$ &  $r(q^2-1)/2$ & $2$
\end{tabular}
\end{center}

The characters of $S$ of degrees $1$ and $q^4$ are unique, so they are invariant under $\varphi$. Also, the characters $W_l$ with $l=1,2$ are equal on all classes of semisimple elements, (see \cite[Theorem ~ 13]{Suz}). So they are invariant under $\varphi$ by Lemma \ref{2-1}. Thus by \cite[Corollaries ~ 11.22 and 6.17]{Isa}, we obtain the following result.
\begin{lem}\label{2-2}
All characters of $S$ of degrees $1$, $q^4$ and $r(q^2-1)/2$ are invariant under $\varphi$ and each extends to $2f+1$ distinct irreducible characters of $\Aut(S)$.
\end{lem}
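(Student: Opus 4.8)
The plan is to separate the two assertions --- $\varphi$-invariance and the counting of extensions --- since once invariance is established the extension count is purely formal.

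\emph{Invariance.} The automorphism $\varphi$ permutes $\Irr(S)$ while preserving degrees, so I would first invoke the degree table above: there is a unique irreducible character of degree $1$ and a unique one of degree $q^4$, hence both are fixed by $\varphi$. For the degree $r(q^2-1)/2$ there are two characters $W_1, W_2$, and here one must show that each is \emph{individually} fixed, not merely that the pair $\{W_1,W_2\}$ is preserved. I would combine two facts: by Lemma~\ref{2-1}, $\varphi$ fixes each of the classes of $\sigma$, $\rho$, $\rho^{-1}$ (and of course that of $1$), so $W_l^\varphi$ and $W_l$ agree on these four non-semisimple classes; and by \cite[Theorem~13]{Suz} the functions $W_1$ and $W_2$ coincide on every semisimple class. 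Since $W_l^\varphi$ is again irreducible of degree $r(q^2-1)/2$, it is one of $W_1, W_2$, hence agrees with $W_l$ on all semisimple classes (where $W_1=W_2$) and on the four remaining classes; therefore $W_l^\varphi=W_l$ for $l=1,2$.

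\emph{Extensions.} Because $\Out(S)$ is cyclic of order $2f+1$, the stabilizer in $\Aut(S)$ of any $\varphi$-invariant $\theta\in\Irr(S)$ is all of $\Aut(S)$, and $\Aut(S)/S\cong\Out(S)$ is cyclic of order $2f+1$. Hence \cite[Corollary~11.22]{Isa} provides an extension $\hat\theta\in\Irr(\Aut(S))$ of $\theta$, and by Gallagher's theorem \cite[Corollary~6.17]{Isa} the irreducible characters of $\Aut(S)$ lying over $\theta$ are exactly the products $\beta\hat\theta$ with $\beta$ running over the $2f+1$ linear characters of $\Aut(S)/S$; these are pairwise distinct and each has degree $\theta(1)$. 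Applying this to the characters of degrees $1$, $q^4$ and $r(q^2-1)/2$ identified above yields the claim.

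The step I expect to be the genuine content is showing that $W_1$ and $W_2$ are each fixed rather than swapped; this is precisely where the fine structure of Suzuki's character table enters. Without knowing that $W_1$ and $W_2$ agree on the semisimple classes and that their only possible point of disagreement --- the classes of $\rho$ and $\rho^{-1}$ --- is $\varphi$-stable by Lemma~\ref{2-1}, one could conclude only that $\varphi$ stabilizes the set $\{W_1,W_2\}$, which would be insufficient for the subsequent stabilizer computations that drive the proof of Theorem~A.
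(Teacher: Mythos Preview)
Your argument is correct and follows essentially the same route as the paper: uniqueness for the degrees $1$ and $q^4$, the combination of Lemma~\ref{2-1} with \cite[Theorem~13]{Suz} to see that each $W_l$ is individually $\varphi$-fixed, and then \cite[Corollaries~11.22 and 6.17]{Isa} for the extension count. You have merely spelled out the $W_l$ step in more detail than the paper's one-line justification, but the ingredients and logic are identical.
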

It remains to determine the subgroups of $\Aut(S)$ that occur as stabilizers of characters $X_i$, $Y_j$ and $Z_k$ of $S$. For this, we collect in this section, several number theoretic lemmas, which will be used frequently in our proofs. 
\begin{lem}{\cite[Lemma 4.9]{White}{\rm)}}\label{2-3}
If $p$ is a prime and $m$, $n$ are positive integers such that $m\mid n$, then
\begin{itemize}
\item[{\rm (i)}] $(p^n-1, p^m-1)=p^m-1$, 
\vspace{0.3cm}
\item[{\rm (ii)}] $(p^n-1, p^m+1) = \left\{
\begin{array}{ll}
(p-1, 2) & \text{if $n/m$ is odd},\\
p^m+1 & \text{if $n/m$ is even},
\end{array} \right. $ 
\vspace{0.3cm}
\item[{\rm (iii)}] $(p^n+1, p^m-1)=(p-1, 2)$,
\vspace{0.3cm}
\item[{\rm (iv)}] $(p^n+1, p^m+1) = \left\{
\begin{array}{ll}
p^m+1 & \text{if $n/m$ is odd},\\
(p-1, 2) & \text{if $n/m$ is even}.
\end{array} \right.$  
\end{itemize}
\end{lem}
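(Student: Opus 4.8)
The statement to be proved is Lemma~\ref{2-3}, a purely elementary number-theoretic fact about greatest common divisors of $p^n\pm 1$ and $p^m\pm 1$ when $m\mid n$. The plan is to reduce everything to the single well-known identity $(p^a-1,p^b-1)=p^{(a,b)}-1$, together with a reindexing trick that converts a $+1$ into a factor appearing in a $-1$.

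First I would handle (i): write $n=mk$ and note $p^m-1 \mid p^{mk}-1$ since $p^{mk}-1=(p^m-1)(p^{m(k-1)}+\cdots+p^m+1)$, so the gcd is exactly $p^m-1$. (Alternatively, cite the identity $(p^n-1,p^m-1)=p^{(n,m)}-1$ with $(n,m)=m$.) For (ii), use $p^n+1=\frac{p^{2n}-1}{p^n-1}$; hence any common divisor $d$ of $p^n-1$ and $p^m+1$ divides $p^m+1$ and $p^{2m}-1$, and if $n/m$ is even then $2m\mid n$, so $p^{2m}-1\mid p^n-1$ and thus $p^m+1\mid (p^n-1,p^m+1)$, giving equality with $p^m+1$. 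If $n/m$ is odd, then $(p^n-1,p^{2m}-1)=p^{(n,2m)}-1=p^m-1$, so $d\mid (p^m-1,p^m+1)=(2,p^m-1)=(2,p-1)$, and this value is actually attained (by $1$ if $p=2$, by $2$ if $p$ odd, since $2\mid p^m+1$ and $2\mid p^n-1$), proving the displayed formula. Part (iii) is the case $m\mid n$ of the general statement: any common divisor of $p^n+1$ and $p^m-1$ divides $p^{2n}-1$ and $p^m-1$, with $(2n,m)=m$ so it divides $p^m-1$, and also divides $p^n+1$; since $p^m-1\mid p^n-1$ we get $d\mid (p^n-1,p^n+1)=(2,p-1)$, attained as before. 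For (iv), if $n/m$ is odd then $p^m+1\mid p^n+1$ (because $n=m(2t+1)$ and $p^n+1=(p^m+1)(p^{m\cdot 2t}-p^{m(2t-1)}+\cdots+1)$), so the gcd is $p^m+1$; if $n/m$ is even, reduce to (iii)-type reasoning via $p^n+1\mid p^{2n}-1$ and $(2n,m)=m$ with $n/m$ even forcing $p^m+1\mid p^n-1$, hence $d\mid (p^n-1,p^n+1)=(2,p-1)$.

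Since this is quoted as \cite[Lemma~4.9]{White}, in the paper I would simply cite White's proof; but if a self-contained argument is wanted, the above four cases are each two or three lines. I do not expect any genuine obstacle here: the only mild subtlety is checking in each $(p-1,2)$ case that the bound is \emph{achieved} and not merely an upper bound — this is where one must observe that $p$ even forces all the relevant quantities odd (gcd $=1=(p-1,2)$) while $p$ odd forces them even (gcd $=2=(p-1,2)$). The parity bookkeeping between "$n/m$ odd'' and "$n/m$ even'' in (ii) and (iv) is the part most prone to a careless slip, so I would state the factorizations $p^n\pm1$ in terms of $p^m\pm1$ explicitly before specializing.
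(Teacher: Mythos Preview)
Your proposal is correct, and you have already anticipated the paper's treatment: the paper gives no proof at all for this lemma and simply records it as \cite[Lemma~4.9]{White}. Your self-contained argument via the identity $(p^a-1,p^b-1)=p^{(a,b)}-1$ and the factorizations of $p^n\pm1$ is sound in all four cases, including the attainment checks for the $(p-1,2)$ values, so nothing further is needed.
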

\begin{lem}\label{2-4}
If $n$ is a  proper positive divisor of $2f+1$, then 
\begin{itemize}
\item[{\rm (i)}] $(q^4+1, 2^n\pm 1)=1$, 
\vspace{0.3cm}
\item[{\rm (ii)}]  $(q^4+1, q^2-2^n) = \left\{
\begin{array}{ll}
2^{2n}+1 & \text{if $2f+1\equiv n \,(\mod 4)$}, \\
1 & \text{otherwise}, 
\end{array} \right.$
\vspace{0.3cm}
\item[{\rm (iii)}] $(q^4+1, q^2+2^n) = \left\{
\begin{array}{ll}
2^{2n}+1 & \text{if  $2f+1\equiv -n \,(\mod 4)$}, \\
1 & \text{otherwise}. 
\end{array} \right.$
\end{itemize}
\end{lem}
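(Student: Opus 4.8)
The plan is to prove Lemma \ref{2-4} by systematically reducing each gcd to a computation of orders of $2$ modulo a divisor of $q^4+1$, exploiting that $q^4+1 = 2^{4f+2}+1$ and that any prime dividing $2^{4f+2}+1$ has the property that $2$ has multiplicative order exactly $8f+4$ modulo it (since $2^{4f+2}\equiv -1$ forces the order to divide $8f+4$ but not $4f+2$, and a proper divisor of $8f+4$ dividing $4f+2$ or being odd would contradict this). First I would record this order statement as the basic tool: if $\ell\mid q^4+1$ is prime, then $\mathrm{ord}_\ell(2)=8f+4$; in particular $8f+4\mid \ell-1$, so every prime divisor of $q^4+1$ is large, and more importantly $\ell\nmid 2^k-1$ for any $k$ with $0<k<8f+4$.

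For part (i), since $n$ is a proper divisor of $2f+1$ we have $n\leqslant (2f+1)/3 < 8f+4$, so no prime $\ell\mid q^4+1$ divides $2^n-1$, giving $(q^4+1,2^n-1)=1$; for $2^n+1$, note $\ell\mid 2^n+1$ would force $\ell\mid 2^{2n}-1$ with $2n<8f+4$, again impossible, so $(q^4+1,2^n+1)=1$. For parts (ii) and (iii) the idea is that $q^2-2^n$ and $q^2+2^n$ can be related to $2^{4n}-1$ or $2^{4n}+1$ after multiplying by a conjugate factor: from $q^2\equiv 2^n \pmod{\ell}$ one gets $q^4\equiv 2^{2n}$, hence $2^{2n}\equiv -1\pmod\ell$, so $\ell\mid 2^{2n}+1$ and $\mathrm{ord}_\ell(2)=4n$; combined with $\mathrm{ord}_\ell(2)=8f+4$ this says $\ell$ can exist only when $4n=8f+4$ is impossible unless we instead track the order of $2$ modulo the \emph{common} part. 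The cleaner route: any common prime divisor $\ell$ of $q^4+1$ and $q^2-2^n$ satisfies $\ell\mid q^4-2^{2n}=(q^2-2^n)(q^2+2^n)$ trivially and $\ell\mid q^4+1$, so $\ell\mid 2^{2n}+1$; conversely one checks $2^{2n}+1\mid q^4+1$ precisely when $(4f+2)/(2n)$ is odd, i.e. $2f+1\equiv n\pmod{2n}$, and $2^{2n}+1\mid q^2-2^n$ (rather than $q^2+2^n$) exactly when $(2f+1)/n$ reduces correctly mod $4$; writing out the congruence $q^2 = 2^{2f+1}$ and reducing $2f+1$ modulo $4n$ (using $2^{2n}\equiv -1$) pins down the sign and yields the stated condition $2f+1\equiv n\pmod 4$ for (ii) and $2f+1\equiv -n\pmod 4$ for (iii); when $2^{2n}+1\nmid q^4+1$ one shows the gcd is $1$ by the order argument again. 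Finally one must argue the gcd is exactly $2^{2n}+1$ and not a proper multiple: since $q^4+1$ and $2^{2n}+1$ differ by the factor that is coprime to $2^{2n}+1$ (another application of Lemma \ref{2-3}(iv) applied to $q^4+1=2^{4f+2}+1$ and $2^{2n}+1$, noting $2n\mid 4f+2$ with odd quotient), the full $2^{2n}+1$ divides the gcd, and no larger divisor of $q^4+1$ can, because $(q^4+1)/(2^{2n}+1)$ is coprime to $q^2\mp 2^n$ — this last coprimality is the same order computation once more.

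The main obstacle I expect is bookkeeping the sign: distinguishing when $2^{2n}+1$ divides $q^2-2^n$ versus $q^2+2^n$, which is governed by $2f+1\bmod 4n$, and then translating the condition $2f+1\equiv \pm 2n+n \pmod{4n}$ into the slicker-looking $2f+1\equiv \mp n\pmod 4$ claimed in the statement. One has to be careful that the reduction modulo $4$ rather than modulo $4n$ is actually correct; this works because $n$ is odd (a divisor of the odd number $2f+1$), so $2^{2n}\equiv -1\pmod\ell$ interacts with the exponent $2f+1$ in a way that only depends on $2f+1\bmod 4$ after the substitution $2^{2n}\to -1$ is repeatedly applied — but verifying this cleanly, including the edge behaviour, is the delicate part. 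Everything else is a routine application of Lemma \ref{2-3} together with the order-of-$2$ principle.
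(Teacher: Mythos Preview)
Your central tool—that every prime $\ell$ dividing $q^4+1=2^{4f+2}+1$ has $\mathrm{ord}_\ell(2)=8f+4$—is false. From $2^{4f+2}\equiv -1\pmod\ell$ you correctly get that the order divides $8f+4=4(2f+1)$ but not $4f+2=2(2f+1)$; however, the divisors of $4(2f+1)$ not dividing $2(2f+1)$ are precisely the numbers $4d$ with $d\mid 2f+1$, not only $8f+4$. A concrete counterexample: $5\mid q^4+1$ for every $f\geqslant 1$, yet $\mathrm{ord}_5(2)=4$. Consequently your argument for (i)—``$n<8f+4$ so $\ell\nmid 2^n-1$'' and ``$2n<8f+4$ so $\ell\nmid 2^{2n}-1$''—does not stand. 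The repair is simple: the weaker (and correct) fact $4\mid\mathrm{ord}_\ell(2)$ already suffices, because $n$ divides the odd integer $2f+1$ and is therefore odd, so $4\nmid n$ and $4\nmid 2n$, whence $\mathrm{ord}_\ell(2)\nmid n,2n$. With this correction your plan for (i) goes through, and your strategy for (ii)--(iii)—show any common divisor of $q^4+1$ and $q^2\mp 2^n$ must divide $2^{2n}+1$, then decide when $2^{2n}+1$ divides both factors—is sound (the congruence argument you sketch works for prime powers just as well as for primes, so $d\mid 2^{2n}+1$ genuinely follows).

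For comparison, the paper never passes to primes or orders at all. It shows $d=(q^4+1,q^2-2^n)\mid 2^{2n}+1$ by pure divisibility: since $d$ is odd and $q^2-2^n=2^n(2^{2f-n+1}-1)$, one has $d\mid 2^{2f-n+1}-1$, hence $d\mid 2^{4f-2n+2}-1$; adding $d\mid 2^{4f+2}+1$ gives $d\mid 2^{4f-2n+2}(2^{2n}+1)$ and so $d\mid 2^{2n}+1$. Then Lemma~\ref{2-3}(ii) computes $(2^{2f-n+1}-1,2^{2n}+1)$ directly, and the parity of $(2f-n+1)/(2n)=\bigl((2f+1)/n-1\bigr)/2$ translates immediately into the condition $2f+1\equiv n\pmod 4$ (using that $n$ is odd). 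This route sidesteps both the order-of-$2$ subtlety and the ``delicate'' mod-$4n$ versus mod-$4$ bookkeeping you anticipate.
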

\begin{proof}
Part (i) follows from Lemma \ref{2-3}. Let $d=(q^4+1, q^2-2^n)$. Since $d$ is odd, we have $d\mid 2^{2f-n+1}-1$ and hence $d\mid 2^{4f-2n+2}-1$. Also $d\mid 2^{4f+2}+1$, thus  $d\mid 2^{4f-2n+2}(2^{2n}+1)$ and so $d\mid 2^{2n}+1$. On the other hand, by Lemma \ref{2-3}, we have $2^{2n}+1\mid q^4+1$ and
\begin{equation*}
(q^2-2^n, 2^{2n}+1)=(2^{2f-n+1}-1, 2^{2n}+1) = \left\{
\begin{array}{ll}
2^{2n}+1 & \text{if  $(2f-n+1)/{2n}$ is even}, \\
1 & \text{otherwise}.
\end{array} \right.
\end{equation*}
Thus,  if $2f+1\equiv n\, (\mod  4)$, then $2^{2n}+1\mid q^2-2^n$ and so $d=2^{2n}+1$, while if $2f+1\not\equiv n \,(\mod 4)$, then $(q^2-2^n, 2^{2n}+1)=1$ and $d=1$. Hence (ii) follows. A similar argument applies to (iii). Now the proof is complete. 
\end{proof}
In the following lemma, we shall use $p^n\top d$ to denote that $p^n$ is the highest power of prime number $p$ which divides $d$.
\begin{lem}\label{2-5}
Let $n$ be a proper positive divisor of $2f+1$. Then 
\begin{itemize}
\item[{\rm (i)}]  $(q^2\pm r+1, q^2-2^n) = \left\{
\begin{array}{ll}
2^{n}\pm 2^{(n+1)/2}+1 & \text{if $8\mid 2f-n+1$},\\
2^{n}\mp 2^{(n+1)/2}+1 & \text{if  $4\top 2f-n+1$},\\
1 & \text{otherwise,} 
\end{array} \right.$
\vspace{0.3cm}
\item[{\rm (ii)}] $(q^2\pm r+1, q^2+2^n) = \left\{
\begin{array}{ll}
2^{n}\pm 2^{(n+1)/2}+1 & \text{if  $8\mid 2f+n+1$},\\
2^{n}\mp 2^{(n+1)/2}+1 & \text{if  $4\top 2f+n+1$},\\
1 & \text{otherwise.} 
\end{array} \right.$
\end{itemize}
\end{lem}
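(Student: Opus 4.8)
The plan is to follow exactly the template used in the proof of Lemma \ref{2-4}: reduce the gcd computation to a divisibility statement modulo a fixed number, then apply Lemma \ref{2-3} to read off when that divisor actually occurs. Concretely, I would treat the four sign combinations in (i) and (ii) essentially in parallel, since the argument for $q^2+2^n$ differs from that for $q^2-2^n$ only by replacing the exponent $2f-n+1$ with $2f+n+1$ (writing $q^2+2^n = 2^{2f+1}+2^n = 2^n(2^{2f-n+1}+1)$ versus $q^2-2^n = 2^n(2^{2f-n+1}-1)$, and using that both $q^2\pm r+1$ are odd so the factor $2^n$ is irrelevant). So it suffices to do (i) carefully and remark that (ii) is identical mutatis mutandis.

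First I would record the key factorization: since $r=2^{f+1}$, we have $q^2 \pm r + 1 = 2^{2f+1} \pm 2^{f+1} + 1$, and these are the two "Aurifeuillian" factors of $2^{2(2f+1)}+1 = q^8+1$, namely $(q^2+r+1)(q^2-r+1) = 2^{4f+2}+1$. Hence $q^2 \pm r + 1 \mid 2^{4f+2}+1 \mid 2^{8f+4}-1$. Now let $d = (q^2 \pm r+1,\ q^2 - 2^n)$. Since $d$ is odd it divides $2^{2f-n+1}-1$, hence divides $2^{8f+4-4n}-1$... but more efficiently: $d \mid 2^{4f+2}+1$ and $d\mid 2^{2f-n+1}-1$, and from $2^{4f+2}+1 = 2^{2n}\cdot 2^{4f-2n+2} + 1$ together with $2^{4f-2n+2} = (2^{2f-n+1})^2 \equiv 1 \pmod d$ one gets $d \mid 2^{2n}+1$. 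Conversely, by Lemma \ref{2-3} we have $2^n + 1 \mid 2^{2f+1}+1$ hence (being careful about parities) one checks $2^{2n}+1 \mid 2^{4f+2}+1$ whenever... — actually the cleaner route is to observe that $2^{2n}+1$ further factors as $(2^n+2^{(n+1)/2}+1)(2^n-2^{(n+1)/2}+1)$ when $n$ is odd (which it is, being a divisor of $2f+1$), since $(2^n+1)^2 - 2^{n+1} = 2^{2n}+1$. So I would show $d \mid 2^{2n}+1$, then determine which of the two Aurifeuillian factors of $2^{2n}+1$ divides $q^2+r+1$ and which divides $q^2-r+1$, by comparing the factorizations, and finally determine when such a factor actually divides $q^2-2^n = 2^n(2^{2f-n+1}-1)$, i.e. divides $2^{2f-n+1}-1$.

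The combinatorial heart — and the step I expect to be the main obstacle — is this last bookkeeping: deciding, as a function of $2f-n+1 \bmod 8$, which of $2^n \pm 2^{(n+1)/2}+1$ divides $2^{2f-n+1}-1$, and which sign goes with $q^2+r+1$ versus $q^2-r+1$. The mechanism is that $2^n - 2^{(n+1)/2}+1$ and $2^n+2^{(n+1)/2}+1$ are the primitive parts corresponding to certain Aurifeuillian/cyclotomic factorizations of $2^{4n}-1$; one of them divides $2^m-1$ iff $m$ is an odd multiple of a suitable base, the other iff $m$ lies in a shifted residue class mod $8n$ — and the reason the answer collapses to a condition mod $8$ (namely $8 \mid 2f-n+1$ versus $4 \parallel 2f-n+1$) is that $(2f-n+1)/n$ is an odd integer when $n \mid 2f+1$, so only the $2$-adic valuation of $2f-n+1$ matters. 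I would verify the two cases by the explicit identities $2^{(n+1)/2}(2^{(n+1)/2}\pm 1) \equiv \mp 1 \pmod{2^n \pm 2^{(n+1)/2}+1}$ and tracking the order of $2$ modulo each factor; the "otherwise" case ($\gcd = 1$) is then forced because $2^{2n}+1 \nmid 2^{2f-n+1}-1$ in the remaining residue classes by Lemma \ref{2-3}(ii) applied to the relevant exponents. Once (i) is settled, (ii) follows verbatim with $2f-n+1$ replaced by $2f+n+1$ throughout, and I would simply say so rather than repeat the computation.
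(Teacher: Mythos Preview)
Your overall plan matches the paper's: use $(q^2+r+1)(q^2-r+1)=q^4+1$ (you wrote $q^8+1$; note $q^4=2^{4f+2}$) together with Lemma~\ref{2-4}(ii) to reduce to divisors of $2^{2n}+1$, then split $2^{2n}+1=(2^n+2^{(n+1)/2}+1)(2^n-2^{(n+1)/2}+1)$ and identify which factor lands in $q^2+r+1$ versus $q^2-r+1$. The gap is in that last identification.

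You propose to settle the sign by determining which Aurifeuillian factor divides $2^{2f-n+1}-1$ and by ``tracking the order of $2$'' modulo each factor. Neither discriminates: when $4\mid 2f-n+1$ the \emph{whole} of $2^{2n}+1$ divides $2^{2f-n+1}-1$ (this is exactly the content of Lemma~\ref{2-4}(ii)), so both Aurifeuillian factors do; and the multiplicative order of $2$ modulo each nontrivial factor $2^n\pm 2^{(n+1)/2}+1$ is the same, namely $4n$, since $2^{2n}\equiv-1$ modulo each. (The identity $2^{(n+1)/2}(2^{(n+1)/2}\pm1)\equiv\mp1$ you wrote down is also incorrect as stated.) The case split modulo $8$ in fact enters on the $q^2\pm r+1$ side, not on the $q^2-2^n$ side. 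The paper's device is a direct congruence: when $8\mid 2f-n+1$ one has $2^{2n}+1\mid 2^{(2f-n+1)/2}-1$ by Lemma~\ref{2-3}(ii), and expanding $(2^{(2f-n+1)/2}-1)(2^{(2f+n+1)/2}-1)\equiv 0\pmod{2^{2n}+1}$ gives
\[
q^2\pm r+1\;\equiv\;2^{(2f-n+1)/2}\bigl(2^{n}\pm 2^{(n+1)/2}+1\bigr)\pmod{2^{2n}+1},
\]
which pins down the sign immediately; the case $4\top 2f-n+1$ uses $2^{2n}+1\mid 2^{(2f-n+1)/2}+1$ instead and produces the swapped sign. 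With this congruence in hand, Lemma~\ref{2-4}(ii) finishes (i), and your remark that (ii) follows by replacing $2f-n+1$ with $2f+n+1$ is correct.
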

\begin{proof}
(i)  Observe first that 
\begin{equation*}
q^4+1 = (q^2-r+1)(q^2+r+1). 
\end{equation*}
If $2f+1\not\equiv n \,(\mod 4)$, then Lemma \ref{2-4}(ii) yields 
 $(q^2\pm r+1, q^2-2^n)=1$. So we may assume that $2f+1\equiv n \,(\mod 4)$. If $2f+1\equiv n \,(\mod 8)$, then $2^{2n }+1\mid 2^{(2f-n+1)/2}-1$ by Lemma \ref{2-3}(ii) and hence 
 \begin{equation*}\label{eq1}
(2^{(2f-n+1)/2}-1)(2^{(2f+n+1)/2}-1)\equiv 0 \,\,\,(\mod 2^{2n}+1). 
\end{equation*}
Thus
\begin{equation*}
q^2\pm r+1\equiv 2^{(2f-n+1)/2}(2^{n}\pm 2^{(n+1)/2}+1) \,\,\,\, (\mod 2^{2n}+1). 
\end{equation*}
Since $2^{2n}+1=(2^n-2^{(n+1)/2}+1)(2^n+2^{(n+1)/2}+1)$, we have
\begin{equation*}
 2^n+2^{(n+1)/2}+1\mid q^2+r+1 \quad \text{and}\quad 2^n-2^{(n+1)/2}+1\mid q^2-r+1.
\end{equation*}
Now, Lemma \ref{2-4}(ii) applies to yield that 
\begin{equation*}
(q^2\pm r+1, q^2-2^n)=2^n\pm 2^{(n+1)/2}+1.
\end{equation*}
Finally, if $4\top 2f-n+1$, then $2^{2n}+1\mid 2^{(2f-n+1)/2}+1$ by Lemma \ref{2-3}(iv). Thus 
\begin{equation*}\label{eq2}
(2^{(2f-n+1)/2}+1)(2^{(2f+n+1)/2}+1)\equiv 0 \,\,\,(\mod 2^{2n}+1).
\end{equation*}
Therefore,
\begin{equation*}
q^2\pm r+1\equiv -2^{(2f-n+1)/2}(2^n\mp 2^{(n+1)/2}+1) \,\,\,(\mod 2^{2n}+1).
\end{equation*}
Agian, because of $2^{2n}+1=(2^n-2^{(n+1)/2}+1)(2^n+2^{(n+1)/2}+1)$, we deduce from Lemma \ref{2-4}(ii) that 
\begin{equation*}
(q^2\pm r+1, q^2-2^n)=2^n\mp 2^{(n+1)/2}+1.
\end{equation*}
Hence (i) follows.

(ii) Note that if $8\mid 2f+n+1$, then $2^{2n}+1\mid 2^{(2f+n+1)/2}-1$ by Lemma \ref{2-3}. Also, if $4\top 2f+n+1$, then $2^{2n}+1\mid 2^{(2f+n+1)/2}+1$, so we can apply the above argument for (ii). The proof is now complete.
\end{proof}
\begin{lem}\label{2-6}
Let $m$, $n$ be distinct proper positive divisors of $2f+1$ with $m\mid n$. Set 
\begin{equation*}
d_1=(q^2+r+1, q^2-2^n) \quad \text{or} \quad (q^2+r+1, q^2+2^n),
\end{equation*}
\begin{equation*}
d_2=(q^2+r+1, q^2-2^m) \quad \text{or} \quad (q^2+r+1, q^2+2^m).
\end{equation*}
If $d_1>1$, then $d_1\neq d_2$ except when $m=1$, $n=3$ and one of the following holds:
\begin{itemize}
\item[{\rm (i)}]  $f\equiv 0 \,(\mod 4), \quad d_1=(q^2+r+1, q^2+8),\quad d_2=(q^2+r+1, q^2-2)$,
\item[{\rm (ii)}] $f\equiv 3\, (\mod 4), \quad d_1=(q^2+r+1, q^2-8),\quad d_2=(q^2+r+1, q^2+2)$.
\end{itemize}
Similarly, replacing $q^2+r+1$ above, by $q^2-r+1$ yields that $d_1\neq d_2$ except when $m=1$, $n=3$ and one of the following holds: 
\begin{itemize}
\item[{\rm (iii)}]  $f\equiv 1 \,(\mod 4), \quad d_1=(q^2-r+1, q^2-8),\quad d_2=(q^2-r+1, q^2+2)$,
\item[{\rm (iv)}] $f\equiv 3 \,(\mod 4), \quad d_1=(q^2-r+1, q^2+2),\quad d_2=(q^2-r+1, q^2-2)$.
\end{itemize}

In cases {\rm (i)}-{\rm (iv)}, we have $d_1=d_2=5$.
\end{lem}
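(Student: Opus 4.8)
The plan is to reduce to a finite check by first pinning down the very short list of possible values of $d_1$ and $d_2$. Recall the elementary factorization $2^{2N}+1=(2^{N}-2^{(N+1)/2}+1)(2^{N}+2^{(N+1)/2}+1)$, valid for odd $N$ (it is used in the proof of Lemma~\ref{2-5}). By Lemma~\ref{2-5}, for every proper positive divisor $N$ of $2f+1$ each of the four numbers $(q^{2}+r+1,\,q^{2}\pm 2^{N})$ and $(q^{2}-r+1,\,q^{2}\pm 2^{N})$ belongs to $\{\,1,\ 2^{N}-2^{(N+1)/2}+1,\ 2^{N}+2^{(N+1)/2}+1\,\}$, and in particular divides $2^{2N}+1$. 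Consequently, writing $d_1=d_2=d$, we get $d\mid(2^{2n}+1,\,2^{2m}+1)$.

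Next I would invoke Lemma~\ref{2-3}(iv) with $p=2$: since $m\mid n$ gives $2m\mid 2n$, we have $(2^{2n}+1,2^{2m}+1)=2^{2m}+1$ if $n/m$ is odd and $(2^{2n}+1,2^{2m}+1)=(2-1,2)=1$ if $n/m$ is even. As $d>1$ the even case is impossible, so $n/m$ is odd; being an odd integer bigger than $1$ it is at least $3$, whence $n\ge 3m$, and furthermore $d\mid 2^{2m}+1$. A crude size estimate now closes the reduction. Since $d$ equals one of the two factors of $2^{2n}+1$ displayed above, $d\ge 2^{n}-2^{(n+1)/2}+1>2^{\,n-1}$ (using $(n+1)/2\le n-1$, valid as $n\ge 3$), while $d\le 2^{2m}+1<2^{2m+1}$; hence $n-1<2m+1$, i.e.\ $n\le 2m+1$, which together with $n\ge 3m$ forces $m=1$. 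Then $d\mid 2^{2}+1=5$ with $d>1$ gives $d=5$, and $5=d\ge 2^{n}-2^{(n+1)/2}+1$ excludes every odd $n\ge 5$, leaving $n=3$. Thus $d_1=d_2>1$ can occur only for $m=1$, $n=3$ and $d_1=d_2=5$; and since $5=2^{3}-2^{2}+1$ whereas $2^{1}-2^{1}+1=1$ and $2^{1}+2^{1}+1=5$, such a coincidence forces $d_1$ to be the ``$-$''-factor of $2^{6}+1$ and $d_2$ to be the ``$+$''-factor of $2^{2}+1$.

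It remains to decide, using Lemma~\ref{2-5} once more, precisely for which residue of $f$ modulo $4$ and which choices of the two internal signs one has simultaneously $d_1=2^{3}-2^{2}+1$ and $d_2=2^{1}+2^{1}+1$. For each of the four sign patterns, this is governed by an ``$8\mid$'' or a ``$4\top$'' condition on one of $2f-2$, $2f+4$, $2f$, $2f+2$, each of which rewrites immediately as a congruence on $f$ modulo $4$; intersecting the two conditions isolates exactly the cases (i)--(iv) (two admissible residues when $q^{2}+r+1$ is used and two when $q^{2}-r+1$ is used), with common value $5$ throughout. In the remaining situations with $m=1$, $n=3$, $d_1>1$ one has $d_1\in\{5,13\}$ and $d_2\in\{1,5\}$, and the same congruence bookkeeping shows the only coincidences $d_1=d_2$ are those listed.

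I expect this last congruence bookkeeping to be the genuinely error-prone step: one must keep straight which Aurifeuillean factor each gcd equals and which hypothesis of Lemma~\ref{2-5} triggers it, and a single misaligned sign or residue would corrupt the list. By contrast, the reduction to $(m,n,d)=(1,3,5)$ is essentially forced by the divisibility and size constraints and should go through cleanly.
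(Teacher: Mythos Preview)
Your proposal is correct and ends up at the same place as the paper, but your reduction step is organized differently. The paper simply notes that by Lemma~\ref{2-5} the only possible nontrivial values of $d_1$ and $d_2$ are $2^{n}\pm 2^{(n+1)/2}+1$ and $2^{m}\pm 2^{(m+1)/2}+1$, and then checks directly that these numbers collide only at $(m,n)=(1,3)$ with common value $5$ (e.g.\ since $d_1>5$ once $n>3$). You instead pass through the divisibility $d\mid(2^{2n}+1,2^{2m}+1)$ and invoke Lemma~\ref{2-3}(iv) together with a size bound to force $m=1$, $n=3$, $d=5$. Your route is a bit more systematic and avoids having to argue that the two Aurifeuillean sequences are jointly injective, at the cost of one extra lemma; the paper's is shorter but leaves that injectivity as an implicit check. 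The final congruence bookkeeping via Lemma~\ref{2-5} is handled the same way in both, and the paper is no more explicit about it than you are.
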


\begin{proof}
Note that exactly one of $2f\pm n+1$ is divisible by $4$. So only one of $(q^2+r+1, q^2\pm 2^n)$ is greater than $1$ by Lemma \ref{2-5}. The same is true for $(q^2-r+1, q^2\pm 2^n)$. Also, if $n>3$, then clearly $d_1>5$, while if $n=3$, then $d_1=5$ or $13$. By  comparing the values of $d_1$ and $d_2$, that is the numbers $2^n\pm 2^{(n+1)/2}+1$ with $1$, $2^m\pm 2^{(m+1)/2}+1$, it is relevant to see that $d_1\neq d_2$ with the exception that $m=1$, $n=3$ and $d_1=d_2=5$. By Lemma \ref{2-5}, $d_1=d_2=5$ if and only if one of (i)-(iv), given above holds. 
\end{proof}
The following lemma will be useful when working with characters $Y_j$, $Z_k$ of $S$.
\begin{lem}\label{2-7}
Let $\varepsilon$ be a complex $n$th root of unity and let $i$, $j$, $k$ be integers such that $k^2\equiv -1 \,(\mod n)$. Then 
\begin{equation}\label{eq3}\tag{1}
\varepsilon^{il}+\varepsilon^{-il}+\varepsilon^{ilk}+\varepsilon^{-ilk}=\varepsilon^{jl}+\varepsilon^{-jl}+\varepsilon^{jlk}+\varepsilon^{-jlk}, \quad \text{where}\,\,\, l=1, k-1, 
\end{equation} 
if and only if one of the congruents $i\equiv \pm j\, (\mod n)$ or $i\equiv \pm jk \,(\mod  n)$ holds.
\end{lem}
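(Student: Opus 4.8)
The plan is to recognize the left- and right-hand sides of \eqref{eq3} as orbit sums of the cyclic group action on $\bbZ/n\bbZ$ generated by the two maps $t\mapsto -t$ and $t\mapsto kt$, and then to argue that these orbit sums determine the orbits (hence the exponents up to the group action). Concretely, since $k^2\equiv -1\,(\mod n)$, the subgroup $H=\langle -1, k\rangle$ of $(\bbZ/n\bbZ)^\times$ has order $4$ (it is $\{1,-1,k,-k\}$, and these are distinct because $k\not\equiv\pm 1$ as $k^2\equiv-1$), acting on $\bbZ/n\bbZ$ by multiplication. For an integer $i$ and a root of unity $\varepsilon$ of order $n$, set $S_l(i)=\varepsilon^{il}+\varepsilon^{-il}+\varepsilon^{ilk}+\varepsilon^{-ilk}$; this is the sum of $\varepsilon^{t}$ over $t$ in the $H$-orbit of $il$ (counted with multiplicity if the orbit is degenerate). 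The ``if'' direction is then immediate: if $i\equiv\pm j$ or $i\equiv\pm jk\,(\mod n)$, then $il$ and $jl$ lie in the same $H$-orbit for every $l$, so $S_l(i)=S_l(j)$; this handles both $l=1$ and $l=k-1$ at once.

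For the ``only if'' direction I would use both instances $l=1$ and $l=k-1$ together. From $l=1$ we get that the multiset $\{i,-i,ik,-ik\}$ and $\{j,-j,jk,-jk\}$ have the same image under $t\mapsto\varepsilon^t$, i.e. the same reduction mod $n$ as multisets, because the characters $\{\varepsilon^t : t\in\bbZ/n\bbZ\}$ are linearly independent over $\bbQ$ (they are the distinct $n$th roots of unity, and a vanishing $\bbZ$-combination $\sum c_t\varepsilon^t=0$ with $|c_t|$ small is controlled — more precisely, equality of the two orbit sums forces equality of the orbits as subsets of $\bbZ/n\bbZ$, since each is a union of an $H$-orbit with nonnegative integer multiplicities summing to $4$, and distinct roots of unity can coincide as a sum only when the index multisets agree). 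Hence $j\in H\cdot i=\{\pm i,\pm ik\}$ as elements of $\bbZ/n\bbZ$ — \emph{unless} the orbit $H\cdot i$ has fewer than $4$ elements and some cancellation among roots of unity conspires to make two genuinely different orbits give the same sum. The role of the second equation $l=k-1$ is exactly to rule out this degenerate possibility: multiplying the index set by $k-1$ shifts the orbit to $H\cdot(i(k-1))$, and one checks that a pair of distinct orbits cannot remain ``sum-equal'' after this twist, because $\gcd(k-1,n)$ and the structure of $H$ force the twisted orbits apart.

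The main obstacle is precisely the degenerate-orbit case: when $\gcd(i,n)$ is large the four numbers $i,-i,ik,-ik$ need not be distinct mod $n$, so $S_1(i)$ is a sum of roots of unity with repetition, and a priori two different values of $i$ could produce the same such sum through a vanishing sum of roots of unity (e.g. sums of the form $1+\zeta_3+\zeta_3^2=0$ or $\varepsilon^a-\varepsilon^a=0$). The care needed is to show that, after also imposing the $l=k-1$ equation, no such accidental coincidence survives; I expect this to come down to a short case analysis on $\gcd(i,n)$ and $\gcd(j,n)$ and the order of $\varepsilon^{\gcd}$, using that $n$ is odd (it will divide $q^4+1$ or $q^2\pm r+1$ in the application, all odd) so that the vanishing sums of $n$th roots of unity that can occur are constrained. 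Once that case analysis is in place, combining the information from $l=1$ and $l=k-1$ pins down $i$ modulo $n$ up to multiplication by $H$, which is the claimed dichotomy $i\equiv\pm j$ or $i\equiv\pm jk\,(\mod n)$.
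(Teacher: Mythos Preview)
Your proposal has a genuine gap, and the mechanism you are reaching for is not the one that actually works.

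First, the linear-independence claim is simply false: the $n$th roots of unity span only a $\varphi(n)$-dimensional $\bbQ$-vector space, so equal orbit sums do \emph{not} automatically force equal orbits, even when both orbits have full size~$4$. Your parenthetical ``a vanishing $\bbZ$-combination $\sum c_t\varepsilon^t=0$ with $|c_t|$ small is controlled'' is wishful; there is no such general principle, and the case analysis you defer to (``I expect this to come down to\dots'') is never carried out. Second, your description of the role of $l=k-1$ is wrong: it is not a tie-breaker for degenerate orbits, it is essential already in the generic case. Without it the single equation $S_1(i)=S_1(j)$ does not determine the $H$-orbit of $i$.

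The paper's proof shows what the $l=k-1$ equation is really doing. Write $u_i=\varepsilon^{i}+\varepsilon^{-i}$ and $v_i=\varepsilon^{ik}+\varepsilon^{-ik}$. Then $S_1(i)=u_i+v_i$, and because $k(k-1)\equiv -(k+1)\pmod n$ one computes (via the product-to-sum identity, or by direct expansion) that $S_{k-1}(i)=u_i v_i$. Thus the two instances $l=1$ and $l=k-1$ hand you exactly the two elementary symmetric functions of the pair $\{u_i,v_i\}$, so from $S_1(i)=S_1(j)$ and $S_{k-1}(i)=S_{k-1}(j)$ you get $\{u_i,v_i\}=\{u_j,v_j\}$ as multisets. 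Hence $u_i=u_j$ or $u_i=v_j$, and then $\cos(2\pi i/n)=\cos(2\pi j/n)$ (resp.\ $\cos(2\pi jk/n)$) forces $i\equiv\pm j$ (resp.\ $i\equiv\pm jk$) $\pmod n$; this last step is the content of \cite[Lemma~4.17]{White}. The paper packages the same computation as the vanishing of the product $(u_i-u_j)(u_i-v_j)$. No linear-independence argument, no case analysis on $\gcd(i,n)$, and no appeal to $n$ being odd is needed.
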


\begin{proof}
Suppose that (\ref{eq3}) holds for $l=1, k-1$. Observe that 
\begin{equation*}
\begin{split}
(\varepsilon^i+&\varepsilon^{-i}-\varepsilon^{j}-\varepsilon^{-j})(\varepsilon^{i}+\varepsilon^{-i}-\varepsilon^{jk}-\varepsilon^{-jk}) =
 2+\varepsilon^{2i}+\varepsilon^{-2i}\\
 & -(\varepsilon^i+\varepsilon^{-i})(\varepsilon^j+\varepsilon^{-j}+\varepsilon^{jk}+\varepsilon^{-jk})
   +\varepsilon^{j(k-1)}+\varepsilon^{-j(k-1)}+\varepsilon^{j(k+1)}+\varepsilon^{-j(k+1)}.
 \end{split}
\end{equation*}
Replacing $\varepsilon^j+\varepsilon^{-j}+\varepsilon^{jk}+\varepsilon^{-jk}$ above, by $\varepsilon^i+\varepsilon^{-i}+\varepsilon^{ik}+\varepsilon^{-ik}$ yields 
\begin{equation*}
\begin{split}
(\varepsilon^i+ & \varepsilon^{-i}-\varepsilon^{j}-\varepsilon^{-j})(\varepsilon^{i}+\varepsilon^{-i}-\varepsilon^{jk}-\varepsilon^{-jk}) \\
 & =-\varepsilon^{i(k-1)}-\varepsilon^{-i(k-1)} -\varepsilon^{i(k+1)}-\varepsilon^{-i(k+1)} +\varepsilon^{j(k-1)}+\varepsilon^{-j(k-1)}+\varepsilon^{j(k+1)}+\varepsilon^{-j(k+1)}\\
 &=0,
 \end{split}
\end{equation*}
where the last equality follows from (\ref{eq3}) with $l=k-1$, since $k^2\equiv -1\, (\mod n)$. Therefore,
\begin{equation*}
\varepsilon^i+ \varepsilon^{-i}=\varepsilon^{j}+\varepsilon^{-j}\quad \text{or}\quad \varepsilon^{i}+\varepsilon^{-i}=\varepsilon^{jk}+\varepsilon^{-jk}.
\end{equation*}
Now, Lemma 4.17 of \cite{White} yields that $i\equiv \pm j \,(\mod n)$ or $ i\equiv \pm jk \,(\mod n)$. Conversely, since $k^2\equiv -1 \,(\mod n)$,  it is  clear that each of the above congruents yields (\ref{eq3}) for all integers $l$.  
\end{proof}

\section{Stabilizers of characters of degrees $q^4+1$ and $q^2\pm r+1$}

In this section, we shall determine the subgroups of $\Aut (S)$ that occur as stabilizers of characters $X_i, Y_j$, $Z_k$  of $S$. According to \cite[Theorem ~ 9]{Suz}, the group $S$ contains three cyclic subgroups $A_0, A_1$ and  $ A_2$ of order $q^2-1, q^2+r+1$ and  $q^2-r+1$, respectively. Also, by \cite[Theorem ~13]{Suz}, the characters $X_i$ are equal everywhere except on $A_0-\{1\}$. The  same property is valid for characters $Y_j$ and  $Z_k$ in corresponding with subgroups $A_1$ and $ A_2$, respectively.

First we consider the characters $X_i$, where $1\leqslant i\leqslant q^2/2-1$. Let $\varepsilon_0$ be a primitive $(q^2-1)$th root of unity. If $\xi_0$ is a generator of $A_0$, then
\begin{equation*}
X_i({\xi_0}^l)=\varepsilon_0^{il}+\varepsilon_0^{-il},
\end{equation*}
for all $l$, $1\leqslant l\leqslant q^2-1$. By Lemma \ref{2-1}, the action of a power $\varphi^n$ of $\varphi$ on $\xi_0^l$ is as follows:
\begin{equation*}
{X_i}^{\varphi^n}(\xi_0^l)=X_i((\xi_0^l)^{\varphi^{-n}})=X_i(\xi_0^{-l2^n})=\varepsilon_0^{il2^n}+\varepsilon_0^{-il2^n}.
\end{equation*}
\begin{lem}\label{3-1}
Let $i, n$ be  positive integers such that $1\leqslant i\leqslant q^2/2-1$ and  $n\mid 2f+1$. 
\begin{itemize}
\item[{\rm (i)}]  The character $X_i$ is invariant under $\varphi^n$ if and only if $q^2-1\mid (2^n-1)i$. 
\item[{\rm (ii)}] Let $N=\langle\varphi^n\rangle$ and set $i=(q^2-1)/(2^n-1)$. Then $N$ is the stabilizer of $X_i$ in $\langle \varphi\rangle$ unless when $n=1$, in which case $N$ does not stabilize any $X_i$.
\end{itemize}
\end{lem}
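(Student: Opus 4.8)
The plan is to compute directly with the character values. From the formula displayed just before the lemma, $X_i$ is invariant under $\varphi^n$ precisely when $X_i^{\varphi^n}$ and $X_i$ agree on every power $\xi_0^l$ of the generator $\xi_0$ of $A_0$; since the characters $X_j$ are constant outside $A_0 - \{1\}$ and distinct characters $X_j$ are determined by their restriction to $A_0$ (by \cite[Theorem~13]{Suz}), this is the only condition to check. So for part (i) I would set $\varepsilon = \varepsilon_0$ a primitive $(q^2-1)$th root of unity and ask when $\varepsilon^{il} + \varepsilon^{-il} = \varepsilon^{il2^n} + \varepsilon^{-il2^n}$ for all $l$ with $1 \leqslant l \leqslant q^2-1$. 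Taking $l=1$ already forces, via \cite[Lemma~4.17]{White} (a $\cos$-injectivity statement for roots of unity, the same tool used in Lemma~\ref{2-7}), that $i2^n \equiv \pm i \pmod{q^2-1}$. The sign $+i$ gives $q^2-1 \mid (2^n-1)i$, which is the claimed condition; the sign $-i$ gives $q^2-1 \mid (2^n+1)i$, and I would rule this out by noting $q^2-1$ is even (indeed $q^2-1 = 2^{2f+1}-1$ is odd — so actually I must be careful here: $q^2-1$ is odd), so instead I would argue that $2^n+1 \equiv 2 \pmod{2^n-1}$ forces, together with the first relation and $\gcd(2^n\pm1,\,\cdot\,)$ considerations from Lemma~\ref{2-3}, that the two cases collapse or that $-i$ leads back to the $+i$ case. (The cleanest route: $2^n \equiv 1$ or $2^n \equiv -1 \pmod{(q^2-1)/\gcd(i,q^2-1)}$; since $q^2-1 \mid 2^{2f+1}-1$ and $n \mid 2f+1$ with the order of $2$ modulo any divisor of $q^2-1$ being odd, $2^n \equiv -1$ is impossible unless the modulus is $1$, which is absorbed into the $+$ case.) Finally, conversely, if $q^2-1 \mid (2^n-1)i$ then $\varepsilon^{i2^n} = \varepsilon^{i}$ and invariance on all $\xi_0^l$ is immediate.

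For part (ii), with $i = (q^2-1)/(2^n-1)$ — which is an integer since $2^n - 1 \mid 2^{2f+1}-1 = q^2-1$ by Lemma~\ref{2-3}(i), and which satisfies $1 \leqslant i \leqslant q^2/2 - 1$ when $n \geqslant 2$, since then $i \leqslant (q^2-1)/3 < q^2/2 - 1$ — part (i) shows $X_i$ is fixed by $\varphi^n$, hence by all of $N = \langle \varphi^n\rangle$. To see $N$ is the full stabilizer in $\langle\varphi\rangle \cong \bbZ/(2f+1)\bbZ$, I would let $\varphi^m$ (with $m \mid 2f+1$, $m \mid n$, so $m \leqslant n$) fix $X_i$; by (i) this means $q^2-1 \mid (2^m-1)i = (2^m-1)(q^2-1)/(2^n-1)$, i.e. $2^n - 1 \mid 2^m - 1$, which by Lemma~\ref{2-3}(i) forces $n \mid m$, hence $m = n$. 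So the stabilizer is exactly $\langle\varphi^n\rangle = N$. The case $n = 1$ is genuinely exceptional: then $i = q^2-1$, which is not in the range $1 \leqslant i \leqslant q^2/2-1$, so this $X_i$ does not exist; and for any actual $X_i$ with $1 \leqslant i \leqslant q^2/2-1$, invariance under $\varphi$ itself would need $q^2-1 \mid (2-1)i = i < q^2-1$, impossible. Hence no $X_i$ is $\varphi$-invariant, which is the final assertion.

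The main obstacle I anticipate is the sign issue in part (i): extracting the clean divisibility $q^2-1 \mid (2^n-1)i$ from the raw trigonometric identity requires disposing of the parasitic solution $i2^n \equiv -i$, and doing this transparently needs the observation that $q^2 - 1 = 2^{2f+1}-1$ together with $n \mid 2f+1$ makes the multiplicative order of $2$ modulo every divisor of $q^2-1$ odd, so $2^n$ can never be $\equiv -1$ modulo a nontrivial such divisor. Everything else is a routine application of Lemma~\ref{2-3}(i) (the $\gcd$ formula for $2^a - 1$) and the $\cos$-injectivity lemma \cite[Lemma~4.17]{White}. I would also make explicit at the outset the reduction "agreement on all of $S$ $\iff$ agreement on $A_0$ $\iff$ agreement at $\xi_0$" so that the whole lemma becomes a single congruence computation.
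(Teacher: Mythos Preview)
Your argument is correct and follows essentially the same route as the paper: reduce to values on $A_0$, apply the $\cos$-injectivity lemma from \cite{White} at $l=1$ to get $i2^n\equiv\pm i\pmod{q^2-1}$, eliminate the minus sign, and then run the divisor computation with Lemma~\ref{2-3}(i) for part~(ii). The only expository difference is that the paper kills the parasitic case $i2^n\equiv -i$ in one stroke via Lemma~\ref{2-3}(ii) (which gives $(q^2-1,2^n+1)=1$ directly), whereas you reach the same conclusion through the equivalent observation that the multiplicative order of $2$ modulo any divisor of $q^2-1=2^{2f+1}-1$ is odd.
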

\begin{proof}
(i) By Lemma \ref{2-1} and the above discussion, we know that $X_i$ is invariant under  $\varphi^n$ if and only if  
$X_i(\xi_0^l)=X_i^{\varphi^n}(\xi_0^l)$, for all $l$, $1\leqslant l\leqslant q^2-1$, hence if and only if 
\[\varepsilon_0^{il}+\varepsilon_0^{-il}=\varepsilon_0^{il2^n}+\varepsilon_0^{-il2^n},\]
for all $l$. In particular, by taking $l=1$, we obtain $i2^n\equiv \pm i \,(\mod q^2-1)$, see \cite[Lemma~ 4.7]{White}. Note  that the congruent $i2^n\equiv -i \,(\mod q^2-1)$ is impossible, otherwise Lemma \ref{2-3}(ii) yields that $q^2-1\mid i$, a contradiction to $1\leqslant i \leqslant q^2/2-1$. Hence  $X_i$ is invariant under $\varphi^n$ if and only if $q^2-1\mid (2^n-1)i$, as claimed.
\\

(ii) Note that if $n=1$, then part (i) implies there is no $X_i$ stabilized by $N=\langle \varphi\rangle$. So we may assume that $n>1$. It follows that 
\[i=\frac{q^2-1}{2^n-1}\leqslant \frac{q^2-1}{3}<\frac{q^2}{2}-1.\]
By Lemma \ref{2-3}, $i$ is an integer, hence $X_i\in\Irr(S)$. Moreover, we have $q^2-1= (2^n-1)i$, so  by part (i), $X_i$ is stabilized by $N$. If the stabilizer, $T$, of $X_i$ in $\langle \varphi\rangle$ properly contains $N$, then $T=\langle \varphi^t\rangle$ for some divisor $t$ of $n$ with $1\leqslant t< n$. By part (i), we have $q^2-1\mid (2^t-1)i$.  It follows that  $2^n-1\mid 2^t-1$, a contradiction, since $t<n$. Therefore, $N$ is the stabilizer of $X_i$ in $\langle\varphi\rangle$.  
\end{proof}

We next consider the characters $Y_j$, where $1\leqslant j\leqslant (q^2+r)/4$. Let $\varepsilon_1$ be a primitive $(q^2+r+1)$th root of unity. If $\xi_1$ is a generator of $A_1$, then
\begin{equation*}
Y_j({\xi_1}^l)=-(\varepsilon_1^{jl}+\varepsilon_1^{-jl}+\varepsilon_1^{jlq^2}+\varepsilon_1^{-jlq^2}),
\end{equation*}
for all $l$, $1\leqslant l\leqslant q^2+r+1$. Note that by \cite[Theorem ~13]{Suz}, the characters $Y_j$ are equal everywhere except on $A_1-\{1\}$.

\begin{lem}\label{3-2}
Let $j, n$ be  positive integers such that $1\leqslant j\leqslant (q^2+r)/2$ and  $n\mid 2f+1$.  Then the character $Y_j$ is invariant under $\varphi^n$ if and only if 
\begin{equation*}
q^2+r+1\mid (q^2-2^n)j\qquad\text{or}\qquad q^2+r+1\mid (q^2+2^n)j.
\end{equation*}
\end{lem}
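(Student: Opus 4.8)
The plan is to follow the same template as the proof of Lemma \ref{3-1}(i), but now with the $4$-term character values of $Y_j$ on $A_1$ and with $n=q^2$ playing the role of the exponent $k$ in Lemma \ref{2-7}. First I would record that $\varphi^n$ acts on the generator $\xi_1$ of $A_1$ via $\xi_1 \mapsto \xi_1^{2^n}$ (by Lemma \ref{2-1}), so that, using $Y_j^{\varphi^n}(\xi_1^l)=Y_j(\xi_1^{-l2^n})$ and the fact that $Y_j$ is constant off $A_1-\{1\}$, the character $Y_j$ is invariant under $\varphi^n$ if and only if
\begin{equation*}
\varepsilon_1^{jl}+\varepsilon_1^{-jl}+\varepsilon_1^{jlq^2}+\varepsilon_1^{-jlq^2}
= \varepsilon_1^{j2^n l}+\varepsilon_1^{-j2^n l}+\varepsilon_1^{j2^n lq^2}+\varepsilon_1^{-j2^n lq^2}
\end{equation*}
for all $l$ with $1\leqslant l\leqslant q^2+r+1$.

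Next I would set $N=q^2+r+1$, $\varepsilon=\varepsilon_1$, and observe that modulo $N$ one has $q^4 \equiv -1$, i.e. $k:=q^2$ satisfies $k^2\equiv -1\,(\mod N)$; this is exactly the hypothesis of Lemma \ref{2-7}. Applying Lemma \ref{2-7} with $i$ replaced by $j2^n$ and $j$ replaced by $j$ (so that the displayed identity above, read for $l=1$ and $l=k-1$, is a special case of (\ref{eq3})), I get that if $Y_j$ is $\varphi^n$-invariant then one of
\begin{equation*}
j2^n \equiv \pm j \quad\text{or}\quad j2^n \equiv \pm jq^2 \pmod{q^2+r+1}
\end{equation*}
holds. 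The first pair of congruences gives $q^2+r+1 \mid (2^n\mp 1)j$, and the second pair gives $q^2+r+1 \mid (2^n \mp q^2)j$, equivalently (multiplying by $-q^2$ and using $q^4\equiv -1$) $q^2+r+1 \mid (q^2 \pm 2^n)j$. So it remains to absorb the two "extra" conditions $q^2+r+1\mid (2^n-1)j$ and $q^2+r+1\mid (2^n+1)j$ into the two divisibility conditions in the statement. For the converse direction, note that any of the four congruences above, since $q^2$ is exactly the $k$ of Lemma \ref{2-7}, forces (\ref{eq3}) for \emph{all} $l$, hence $\varphi^n$-invariance, by the easy direction of Lemma \ref{2-7}.

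The main obstacle I anticipate is pruning the list of four congruences down to the two divisibilities asserted. The key arithmetic point should be: since $n\mid 2f+1$ and $q^2=2^{2f+1}$, Lemma \ref{2-3} gives $(q^2+r+1,\,2^n-1)$ and $(q^2+r+1,\,2^n+1)$ are both $1$ — indeed $2^n-1\mid 2^{2f+1}-1=q^2-1$, which is coprime to $q^2+r+1$ by the fact that $r(q^2-1)/2$ and $q^2+r+1$ come from disjoint prime sets (or directly: any common divisor of $q^2-1$ and $q^2+r+1$ divides $r+2$, and a short estimate rules this out), and similarly $2^n+1\mid q^4-1$ is handled; hence $q^2+r+1\mid (2^n\mp1)j$ would force $q^2+r+1\mid j$, contradicting $1\leqslant j\leqslant (q^2+r)/2$. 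Thus only the congruences $j2^n\equiv \pm jq^2$ survive, which rearrange to the two stated conditions. I would lay out this coprimality computation carefully (it is the one nontrivial step) and then conclude; the rest is bookkeeping parallel to Lemma \ref{3-1}.
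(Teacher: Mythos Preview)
Your proposal is correct and follows essentially the same route as the paper: reduce invariance to the four-term identity on $A_1$, apply Lemma~\ref{2-7} with $k=q^2$ (using $q^4\equiv -1\pmod{q^2+r+1}$), and then eliminate the two congruences $j2^n\equiv\pm j$ by a coprimality argument. The only difference is that the paper dispatches the coprimality in one line by citing Lemma~\ref{2-4}(i), which gives $(q^4+1,2^n\pm1)=1$ and hence $(q^2+r+1,2^n\pm1)=1$ immediately; your hands-on sketch via $2^n\mp1\mid q^4-1$ and $(q^4+1,q^4-1)=1$ reaches the same conclusion but is more work than necessary given the lemmas already in place.
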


\begin{proof}
The character $Y_j$ is invariant under $\varphi^n$ if and only if 
\[Y_j(\xi_1^l)=Y_j^{\varphi^n}(\xi_1^l)=Y_j(\xi_1^{-l2^{n}}),\]
for all $l$, $1\leqslant l\leqslant q^2+r+1$, hence  if and only if 
\[\varepsilon_1^{jl}+\varepsilon_1^{-jl}+\varepsilon_1^{jlq^2}+\varepsilon_1^{-jlq^2}=\varepsilon_1^{jl2^n}+\varepsilon_1^{-jl2^n}+\varepsilon_1^{jlq^22^n}+\varepsilon_1^{-jlq^22^n},\]
for all $l$. Since $q^4\equiv -1 \,(\mod q^2+r+1)$, so by Lemma \ref{2-7}, this holds if and only if 
\[ j2^n\equiv \pm j \,\,(\mod q^2+r+1)\qquad \text{or}\qquad jq^2\equiv \pm j2^n \,\,(\mod q^2+r+1).\]
The congruents $j2^n\equiv\pm j \,(\mod q^2+r+1)$ are impossible, otherwise Lemma \ref{2-4}(i) yields $q^2+r+1\mid j$, a contradiction to $1\leqslant j \leqslant (q^2+r)/4$. Hence the result follows. 
\end{proof}

\begin{lem}\label{3-3}
Let $n$ be a positive divisor of $2f+1$. Set $N=\langle\varphi^n\rangle.$ Then $N$ is the stabilizer in $\langle\varphi\rangle$ of some $Y_j$ unless one of the following cases occur.
\begin{itemize}
\item[{\rm (i)}]  $n=1$ and $f\equiv 1$ or $2\, (\mod 4)$, 
\item[{\rm (ii)}] $n=3$ and $f\equiv 0$ or $3 \,(\mod 4)$.
\end{itemize}
In cases {\rm (i)} and {\rm (ii)}, $N$ does not stabilize any $Y_j$.
\end{lem}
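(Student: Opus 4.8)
The plan is, for each divisor $n$ of $2f+1$ outside the exceptional list, to exhibit a single explicit $Y_j$ whose stabilizer in $\langle\varphi\rangle$ is exactly $N=\langle\varphi^n\rangle$, and, in the two exceptional cases, to show that no $Y_j$ has stabilizer $N$. Everything is driven by the $\varphi^n$-invariance criterion of Lemma~\ref{3-2} together with the gcd computations in Lemmas~\ref{2-5} and~\ref{2-6}. The reduction is immediate: since $\langle\varphi\rangle$ is cyclic of order $2f+1$, the stabilizer of any $Y_j$ equals $\langle\varphi^t\rangle$ for some $t\mid 2f+1$, and it contains $N$ exactly when $t\mid n$; hence it equals $N$ iff $Y_j$ is $\varphi^n$-invariant but fails to be $\varphi^t$-invariant for every proper divisor $t$ of $n$. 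So I must realise exactly this invariance profile.

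If $n=2f+1$ then $N$ is trivial and I take $j=1$: every proper divisor $m$ of $2f+1$ satisfies $m\leqslant(2f+1)/3\leqslant f$, so $0<q^2-2^m<q^2+2^m<q^2+r+1$, and by Lemma~\ref{3-2} no $\varphi^m$ fixes $Y_1$. Now let $n$ be a proper divisor of $2f+1$. As observed in the proof of Lemma~\ref{2-6}, exactly one of $2f-n+1$, $2f+n+1$ is divisible by $4$, so by Lemma~\ref{2-5} exactly one of $(q^2+r+1,q^2-2^n)$, $(q^2+r+1,q^2+2^n)$ exceeds $1$; call it $d_n$, and say $d_n=(q^2+r+1,q^2-2^n)$ (the other sign being identical). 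When $d_n>1$ — that is, for every proper divisor $n\geqslant3$, and also for $n=1$ when $f\equiv0$ or $3\pmod 4$ — we have $d_n\in\{2^n-2^{(n+1)/2}+1,\;2^n+2^{(n+1)/2}+1\}$ with $d_n\geqslant5$, so $j:=(q^2+r+1)/d_n$ is an integer with $1\leqslant j\leqslant(q^2+r+1)/5\leqslant(q^2+r)/4$, hence a legitimate index; and since $d_n\mid q^2-2^n$, $q^2+r+1=d_nj$ divides $(q^2-2^n)j$, so $Y_j$ is $\varphi^n$-invariant by Lemma~\ref{3-2}. If a proper divisor $t$ of $n$ also fixed $Y_j$, then Lemma~\ref{3-2} would force $d_n\mid(q^2+r+1,q^2-2^t)$ or $d_n\mid(q^2+r+1,q^2+2^t)$; but $t\mid n$ with $t<n$ and $n$ odd give $n\geqslant3t$, so these gcds are at most $2^t+2^{(t+1)/2}+1\leqslant d_n$, forcing one of them to equal $d_n$, which Lemma~\ref{2-6} rules out unless $t=1$, $n=3$ and $f\equiv0$ or $3\pmod4$. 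Hence for all other $n$ the stabilizer of this $Y_j$ is exactly $N$, as required.

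It remains to treat the exceptions. If $n=1$ and $f\equiv1$ or $2\pmod4$, then $(q^2+r+1,q^2-2)=(q^2+r+1,q^2+2)=1$ by Lemma~\ref{2-5}, so any $\varphi$-invariant $Y_j$ would satisfy $q^2+r+1\mid j$, impossible for $1\leqslant j\leqslant(q^2+r)/4$; thus $N=\langle\varphi\rangle$ fixes no $Y_j$. If $n=3$ and $f\equiv0$ or $3\pmod4$ (so $3\mid 2f+1$), then $d_3=5$ by Lemma~\ref{2-5}; the argument of the previous paragraph, together with $2(q^2+r+1)/5>(q^2+r)/4$, shows that the only $\varphi^3$-invariant character is $Y_{j_0}$ with $j_0=(q^2+r+1)/5$. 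Now Lemma~\ref{2-6}\,(i)--(ii) gives $d_1=5=d_3$ in exactly these cases, so $j_0=(q^2+r+1)/d_1$ and the construction above (with $n=1$) shows $Y_{j_0}$ is also $\varphi$-invariant; hence its stabilizer properly contains $\langle\varphi^3\rangle$, so no $Y_j$ has stabilizer $N$. (Since $N=\langle\varphi^3\rangle$ does fix $Y_{j_0}$, the clause ``$N$ does not stabilize any $Y_j$'' should here be read as ``$N$ is not the stabilizer of any $Y_j$'', which is all that later applications need.)

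I expect the main obstacle to be the bookkeeping in the second paragraph: confirming that $j=(q^2+r+1)/d_n$ always lands in $\{1,\dots,(q^2+r)/4\}$, and that for a proper divisor $t$ of $n$ the gcd $(q^2+r+1,q^2\pm2^t)$ is bounded above by $d_n$ (which rests on $n\geqslant3t$ for $t\mid n$, $t<n$, $n$ odd), so that equality with $d_n$ is the only obstruction and Lemma~\ref{2-6} can be invoked — all while keeping the two sign choices in Lemmas~\ref{2-5}--\ref{2-6} consistent throughout.
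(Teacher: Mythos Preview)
Your proposal is correct and follows essentially the same route as the paper: pick $j=(q^2+r+1)/d_n$ with $d_n$ the nontrivial one of $(q^2+r+1,q^2\pm2^n)$, then use Lemma~\ref{3-2} and Lemma~\ref{2-6} to rule out invariance under $\varphi^t$ for proper divisors $t\mid n$. The only cosmetic difference is that the paper phrases the last step via minimality of $j$ (forcing $j=k:=(q^2+r+1)/(q^2+r+1,q^2\pm2^t)$, hence equal gcds), whereas you use the size estimate $2^t+2^{(t+1)/2}+1\leqslant d_n$ to reach the same equality; both then invoke Lemma~\ref{2-6}. Your remark that in case~(ii) the sentence ``$N$ does not stabilize any $Y_j$'' must be read as ``$N$ is not the \emph{stabilizer} of any $Y_j$'' is well taken---the paper's own proof shows exactly this (and even exhibits $Y_{(q^2+r+1)/5}$ as a character that $\langle\varphi^3\rangle$ fixes but whose stabilizer is the full $\langle\varphi\rangle$).
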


\begin{proof}
If $Y_1$ is invariant under $\varphi^n$, then $q^2+r+1\mid q^2-2^n$ or $q^2+r+1\mid q^2+2^n$ by Lemma \ref{3-2}. (It is relevant to see that if $n<2f+1$, then none of the numbers $q^2\pm r+1$ divide $q^2-2^n$ or $q^2+2^n$.)  Hence we must have $n=2f+1$. It follows that $N=S$ is the stabilizer of $Y_1$ in $\langle \varphi\rangle$.

Assume then that $n<2f+1$. If $n=1$ and $f\equiv 1$ or $2 \,(\mod 4)$, then Lemma \ref{2-5} yields $(q^2+r+1, q^2\pm2)=1$. In this case, we know  by Lemma \ref{3-2} that $N=\langle\varphi\rangle$ does not stabilize any $Y_j$. Assume now that $n=3$ and $f\equiv 0$ or $3 \,(\mod 4)$. If $N=\langle\varphi^3\rangle$ stabilizes some $Y_j$, then Lemma \ref{3-2} yields that 
\[q^2+r+1\mid (q^2-8)j \quad \text{or}\quad q^2+r+1\mid (q^2+8)j.\]
By Lemma \ref{2-6}, we have 
\[(q^2+r+1, q^2-8)=(q^2+r+1, q^2+2)=5\quad\text{or}\quad (q^2+r+1, q^2+8)=(q^2+r+1, q^2-2)=5,\]
hence
\[q^2+r+1\mid (q^2+2)j \quad \text{or} \quad q^2+r+1\mid (q^2-2)j.\]
Thus $\langle \varphi\rangle$ stabilizes $Y_j$. Since $n=3$ is a proper divisor of $2f+1$, so $N<\langle \varphi\rangle$. It follows that $N$ is not the stabilizer of any $Y_j$. In particular, note that if (ii) occurs, then $\langle\varphi\rangle$ is the stabilizer of $Y_j$ with $j=(q^2+r+1)/5$.

We may therefore assume that exceptions (i) and (ii) does not occur.  Note that only one of $2f\pm n+1$ is divisible by $4$. So exactly one of $(q^2+r+1, q^2\pm 2^n)$ is greater than $1$ and in fact $\geqslant 5$ by Lemma \ref{2-5}. We may assume without loss that $(q^2+r+1, q^2-2^n)>1$. The same argument holds if we replace $q^2-2^n$ by $q^2+2^n$.  Let $j$ be the least positive integer such that $N$ is contained in the stabilizer of $Y_j$ in $\langle\varphi\rangle$. Then by Lemma \ref{3-2}, 
\[j=\frac{q^2+r+1}{(q^2+r+1, q^2-2^n)}.\]
Since
\[j\leqslant \frac{q^2+r+1}{5}<\frac{q^2+r}{4},\]
it follows that $Y_j\in\Irr(S)$. If the stabilizer, $T$, of $Y_j$ in $\langle\varphi\rangle$ properly contains $N$, then $T=\langle\varphi^t\rangle$ for  some divisor $t$ of $n$ with $1\leqslant t< n$. By Lemma \ref{3-2}, we have 
\[q^2+r+1\mid (q^2-2^t)j\qquad \text{or}\qquad q^2+r+1\mid (q^2+2^t)j.\]
It follows that one of $(q^2+r+1, q^2\pm 2^t)$ is greater than $1$. Again, we may assume that $(q^2+r+1, q^2+2^t)>1$. Then $(q^2+2^t)/(q^2+r+1, q^2-2^n)$ is an integer and so $(q^2+r+1, q^2-2^n)\mid q^2+2^t$. Thus  
\[j\geqslant \frac{q^2+r+1}{(q^2+r+1, q^2+2^t)}.\]
For equality, let $k=(q^2+r+1)/(q^2+r+1, q^2+2^t)$. Then $T$ and hence $N$ is contained in the stabilizer of $Y_k$ in $\langle\varphi\rangle$.  By the choice of  $j$,  we must have $j=k$, but this is impossible by Lemma \ref{2-6}. Hence $N=T$ and the proof is complete.
\end{proof}  
Finally we consider the characters $Z_k$, where $1\leqslant j\leqslant (q^2-r)/4$. Let $\varepsilon_2$ be a primitive $(q^2-r+1)$th root of unity. If $\xi_2$ is a generator of $A_2$, then
\begin{equation*}
Z_k({\xi_2}^l)=-(\varepsilon_2^{jl}+\varepsilon_2^{-jl}+\varepsilon_2^{jlq^2}+\varepsilon_2^{-jlq^2}).
\end{equation*}
Note that by \cite[Theorem ~13]{Suz}, the characters $Z_k$ are equal everywhere except on $A_2-\{1\}$.
Reasoning as in the proof of Lemmas \ref{3-2} and \ref{3-3}, we obtain the following result on the stabilizers of characters $Z_k$.
\begin{lem}\label{3-4}
Let $n$ be a positive divisor of $2f+1$. Set $N=\langle\varphi^n\rangle.$ Then $N$ is the stabilizer in $\langle\varphi\rangle$ of some $Z_k$ unless one of the following cases occur. 
\begin{itemize}
\item[{\rm (i)}]  $n=1$ and $f\equiv 0$ or $3 \,(\mod 4)$, 
\item[{\rm (ii)}] $n=3$ and $f\equiv 1$ or $2 \,(\mod 4)$.
\end{itemize}
In cases {\rm (i)} and {\rm (ii)}, $N$ does not stabilize any $Z_k$.
\end{lem}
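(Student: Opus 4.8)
The plan is to mirror, essentially verbatim, the proof of Lemma \ref{3-3}, exploiting the symmetry between the subgroups $A_1$ and $A_2$ and between the congruences in Lemmas \ref{2-5} and \ref{2-6}. First I would record, exactly as in the paragraph preceding Lemma \ref{3-2}, the formula for the action of $\varphi^n$ on $Z_k(\xi_2^l)$, namely ${Z_k}^{\varphi^n}(\xi_2^l)=Z_k(\xi_2^{-l2^n})$. Then I would state and prove the analogue of Lemma \ref{3-2}: since $q^4\equiv -1\,(\mod q^2-r+1)$, Lemma \ref{2-7} (with $\varepsilon=\varepsilon_2$ and $k=q^2$) shows that $Z_k$ is invariant under $\varphi^n$ if and only if $q^2-r+1\mid (q^2-2^n)k$ or $q^2-r+1\mid (q^2+2^n)k$, the congruences $k2^n\equiv\pm k\,(\mod q^2-r+1)$ being ruled out by Lemma \ref{2-4}(i) together with the bound $1\leqslant k\leqslant (q^2-r)/4$. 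Actually the cleanest presentation is to simply note in the text, as the author already does, that this analogue follows by reasoning as in Lemma \ref{3-2}, and then carry out the argument of Lemma \ref{3-3} with $q^2+r+1$ replaced by $q^2-r+1$ throughout.

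Next I would handle the case $n=2f+1$ separately exactly as before: if $Z_1$ is $\varphi^n$-invariant then $q^2-r+1\mid q^2\pm 2^n$, which by the parenthetical remark (none of $q^2\pm r+1$ divides $q^2\pm 2^n$ when $n<2f+1$) forces $n=2f+1$, so $N=S$ is the stabilizer of $Z_1$. Then, assuming $n<2f+1$, I would isolate the two genuine exceptions. For $n=1$: by Lemma \ref{2-5}(i),(ii) with the roles of the sign conventions tracked carefully, $(q^2-r+1,q^2\pm 2)=1$ precisely when $f\equiv 0$ or $3\,(\mod 4)$ — this is where the congruence classes swap relative to the $Y_j$ case, because the relevant conditions are $8\mid 2f\mp 1+1$ versus $4\top 2f\mp 1+1$ and these are governed by $f\pmod 4$ with the opposite parity pattern to $q^2+r+1$. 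So in that case $\langle\varphi\rangle$ fixes no $Z_k$, giving exception (i). For $n=3$: when $f\equiv 1$ or $2\,(\mod 4)$, Lemma \ref{2-6}(iii),(iv) gives $(q^2-r+1,q^2\mp 8)=(q^2-r+1,q^2\pm 2)=5$, so divisibility of $(q^2\mp 8)k$ by $q^2-r+1$ upgrades to divisibility of $(q^2\pm 2)k$, hence any $Z_k$ fixed by $\langle\varphi^3\rangle$ is already fixed by $\langle\varphi\rangle$; since $3$ is a proper divisor of $2f+1$, $\langle\varphi^3\rangle$ is then not the full stabilizer, giving exception (ii), with $\langle\varphi\rangle$ the stabilizer of $Z_k$ for $k=(q^2-r+1)/5$.

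Finally, assuming neither exception occurs, I would reproduce the extremal-$k$ argument of Lemma \ref{3-3}: exactly one of $(q^2-r+1,q^2\pm 2^n)$ exceeds $1$ and is in fact $\geqslant 5$ by Lemma \ref{2-5}; choosing $k$ minimal with $N$ contained in the stabilizer of $Z_k$ gives $k=(q^2-r+1)/(q^2-r+1,q^2-2^n)$ (up to the harmless sign swap), and $k\leqslant (q^2-r+1)/5<(q^2-r)/4$ so $Z_k\in\Irr(S)$; if the stabilizer $T=\langle\varphi^t\rangle$ properly contained $N$ then one of $(q^2-r+1,q^2\pm 2^t)>1$, which would force $k=(q^2-r+1)/(q^2-r+1,q^2\pm 2^t)$ to equal our chosen $k$ by minimality, contradicting Lemma \ref{2-6}. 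Hence $N=T$. The only real obstacle is bookkeeping: one must chase the $\pm$ signs and the $f\pmod 4$ residues through Lemmas \ref{2-5} and \ref{2-6} to confirm that the exceptional residue classes for $Z_k$ are genuinely $\{0,3\}$ (for $n=1$) and $\{1,2\}$ (for $n=3$), i.e.\ the complements of those appearing in Lemma \ref{3-3}; everything else is a transcription of the $Y_j$ proof. I would verify the sign/residue correspondence once at the outset and then invoke "the same argument as in Lemma \ref{3-3}" for the structural parts.
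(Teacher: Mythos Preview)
Your proposal is correct and follows precisely the approach the paper intends: the paper's own proof of Lemma~\ref{3-4} consists solely of the sentence ``Reasoning as in the proof of Lemmas~\ref{3-2} and~\ref{3-3}, we obtain the following result,'' and what you have written is exactly that reasoning carried out in detail, with the sign and residue bookkeeping (swapping $q^2+r+1$ for $q^2-r+1$ and tracking the resulting change in the exceptional $f\pmod 4$ classes via Lemmas~\ref{2-5} and~\ref{2-6}) handled correctly.
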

\section{Main Results}
In this section, we complete the proof of Theorem A. It suffices by Lemma \ref{2-2} to find the degrees  of irreducible characters of $G$ with $S\leqslant G\leqslant \Aut(S)$ lying over characters $X_i$, $Y_j$ and $Z_k$ of $S$.
\begin{theorem}\label{4-1}
Let $S={^2B_2(q^2)}$, where $q^2=2^{2f+1}$, $f\geqslant 1$ and let $S\leqslant G\leqslant \Aut(S)$ with $|G : S|=d$. Then the degrees of the irreducible characters of $G$ lying over characters of $S$ of degree $q^4+1$ are precisely $(q^4+1)a$, where $a$ is a positive divisor of $d$, with the exception that $a\neq 1$ when $G=\Aut(S)$.
\end{theorem}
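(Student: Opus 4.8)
The plan is to reduce the statement, via the Clifford-theoretic argument already recorded in the introduction, to a purely combinatorial question about the cyclic group $\Out(S)$, and then to answer that question using Lemma~\ref{3-1}. First I would fix notation: since $G/S$ embeds in $\Out(S)=\langle\varphi\rangle\cong\bbZ/(2f+1)$, one has $d\mid 2f+1$, and $G/S=\langle\varphi^{e}\rangle$ with $e=(2f+1)/d$, the conjugation action of $G$ on $\Irr(S)$ being exactly the action of $\langle\varphi^{e}\rangle$. The characters of $S$ of degree $q^4+1$ are the $X_i$ with $1\leqslant i\leqslant q^2/2-1$, so the degrees to be computed are $\bigcup_i\cd(G\mid X_i)$. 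For each $\theta=X_i$, the introduction gives $\cd(G\mid\theta)=\{\,|G:I_G(\theta)|\,\theta(1)\,\}$: indeed $\theta$ extends to $I_G(\theta)$ by \cite[Corollary~11.22]{Isa}, and since $I_G(\theta)/S$ is cyclic, Gallagher's theorem \cite[Corollary~6.17]{Isa} forces $\cd(I_G(\theta)\mid\theta)=\{\theta(1)\}$, whence Clifford's theorem \cite[Theorem~6.11]{Isa} gives the formula. So the whole problem becomes: determine the possible values of the index $|G:I_G(X_i)|$.

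Next I would dispatch the inclusion ``$\subseteq$'' and the exception. Since $I_G(X_i)/S$ is a subgroup of the cyclic group $G/S$ of order $d$, the index $a:=|G:I_G(X_i)|$ is a divisor of $d$, so every degree of $G$ over an $X_i$ is of the form $(q^4+1)a$ with $a\mid d$. If $G=\Aut(S)$, then $G/S=\langle\varphi\rangle$ and Lemma~\ref{3-1}(i) with $n=1$ (equivalently Lemma~\ref{3-1}(ii)) shows that no $X_i$ is $\varphi$-invariant, so $I_G(X_i)\neq G$ for all $i$ and the value $a=1$ is excluded.

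The heart of the argument is then the converse: realizing every admissible $a$. Given a divisor $a$ of $d$, and $a\neq 1$ in the case $G=\Aut(S)$, I would set $n:=ea$ and note that $n$ divides $2f+1$ (because $(2f+1)/n=d/a$ is a positive integer) and that $n>1$ unless $e=a=1$, i.e. unless $G=\Aut(S)$ and $a=1$, which we have excluded; so $n\geqslant 2$. Taking $i=(q^2-1)/(2^n-1)$, Lemma~\ref{2-3}(i) makes $i$ an integer, and $n\geqslant 2$ gives $i\leqslant(q^2-1)/3<q^2/2-1$, so $X_i\in\Irr(S)$. By Lemma~\ref{3-1}(ii) the stabilizer of $X_i$ in $\langle\varphi\rangle$ is exactly $\langle\varphi^{n}\rangle=\langle\varphi^{ea}\rangle$; since $e\mid ea$, this subgroup lies inside $G/S=\langle\varphi^{e}\rangle$, so the stabilizer of $X_i$ in $G/S$ is $\langle\varphi^{ea}\rangle$, of order $(2f+1)/(ea)=d/a$ and hence of index $a$ in $G/S$. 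Thus $|G:I_G(X_i)|=a$ and $\cd(G\mid X_i)=\{(q^4+1)a\}$. Combining this with the first two steps yields the claimed description of the degrees of $G$ over the $X_i$.

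The step I expect to be the main obstacle is this last realization step — not because it is deep, but because it demands the right bookkeeping inside $\Out(S)$: one must choose the exponent $n=ea$, verify it divides $2f+1$, identify the stabilizer in $G/S$ rather than in the full $\langle\varphi\rangle$ and check its index is precisely $a$ using $\langle\varphi^{ea}\rangle\leqslant\langle\varphi^{e}\rangle$, and confirm that the sole obstruction, the degenerate case $n=1$, is exactly the exceptional pair $G=\Aut(S)$, $a=1$ already singled out. Everything else is the reduction from the introduction together with Lemma~\ref{3-1}.
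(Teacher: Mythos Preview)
Your proof is correct and follows essentially the same approach as the paper: both reduce via the Clifford-theoretic argument from the introduction to identifying stabilizers in the cyclic outer automorphism group, set $n=(2f+1)a/d$ (your $n=ea$), and invoke Lemma~\ref{3-1} to see that a character $X_i$ with stabilizer exactly $\langle\varphi^n\rangle$ exists precisely when $n\neq 1$. Your write-up is more explicit about the bookkeeping (checking $\langle\varphi^{ea}\rangle\leqslant\langle\varphi^e\rangle$ and computing the index), but the argument is the same.
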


\begin{proof}
We have $G=S\langle\varphi^{(2f+1)/d}\rangle$. As it was mentioned in section 1, if $\theta\in\Irr(S)$, then $\cd(G\mid\theta)=\{|G : I_G(\theta)|\theta(1)\}.$ Thus, for $a\mid d$, there is a character of $G$ of degree $(q^4+1)a$ lying over a character $\theta$ of $S$ of degree $q^4+1$ if and only if 
\[I_G(\theta)=S\langle\varphi^n\rangle, \qquad\text{where $n=(\frac{2f+1}{d})a$}.\]
By Lemma \ref{3-1}, such a character $\theta$ of $S$ exists if and only if  $n\neq 1$. Since $d\mid 2f+1$, so $n=1$ if and only if $d=2f+1$ and $a=1$. Thus $a\neq 1$ whenever $G=\Aut(S)$.
\end{proof}
\begin{theorem}\label{4-2}
Let $S={^2B_2(q^2)}$, where $q^2=2^{2f+1}$, $f\geqslant 1$ and let $S\leqslant G\leqslant \Aut(S)$ with $|G : S|=d$. Then the degrees of the irreducible characters of $G$ lying over characters of $S$ of degree $(q^2-r+1)(q^2-1)$ are precisely $(q^2-r+1)(q^2-1)b$, where $b$ is a positive divisor of $d$, with the following exceptions:   
\begin{itemize}
\item[{\rm (i)}]  If $f\equiv 1$ or $2 \,\,(\mod 4)$, and $G=\Aut(S)$, then $b\neq 1$, 
\item[{\rm (ii)}] If $f\equiv 0$ or $3 \,\,(\mod 4)$, and $G=\Aut(S)$, then $b\neq 3$.
\end{itemize}
\end{theorem}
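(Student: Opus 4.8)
The plan is to mirror the structure of Theorem \ref{4-1}, replacing Lemma \ref{3-1} by Lemma \ref{3-3}. As in the proof of Theorem \ref{4-1}, write $G=S\langle\varphi^{(2f+1)/d}\rangle$ and recall from Section 1 that $\cd(G\mid\theta)=\{|G:I_G(\theta)|\,\theta(1)\}$ for every $\theta\in\Irr(S)$. Hence, for a divisor $b$ of $d$, there is an irreducible character of $G$ of degree $(q^2-r+1)(q^2-1)b$ lying over a character $\theta$ of $S$ of degree $(q^2-r+1)(q^2-1)$ (that is, over some $Y_j$) if and only if $I_G(\theta)=S\langle\varphi^n\rangle$ with $n=\big(\tfrac{2f+1}{d}\big)b$. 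Since $Y_j$ ranges over the characters equal everywhere except on $A_1-\{1\}$, the stabilizer of $Y_j$ in $G$ is $S\langle\varphi^n\rangle$ precisely when $\langle\varphi^n\rangle$ is the stabilizer of $Y_j$ in $\langle\varphi\rangle$, so Lemma \ref{3-3} applies directly.

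Next I would translate the exceptional cases of Lemma \ref{3-3} into conditions on $b$. By Lemma \ref{3-3}, $N=\langle\varphi^n\rangle$ is the stabilizer in $\langle\varphi\rangle$ of some $Y_j$ for every divisor $n$ of $2f+1$, except that $N$ stabilizes no $Y_j$ when $n=1$ and $f\equiv 1,2\ (\mod 4)$, and when $n=3$ and $f\equiv 0,3\ (\mod 4)$. Now $n=\big(\tfrac{2f+1}{d}\big)b$ equals $1$ exactly when $d=2f+1$ (i.e. $G=\Aut(S)$) and $b=1$; this gives exception (i). For exception (ii), $n=3$ forces $3\mid 2f+1$, and then $n=3$ happens with $b\mid d$ only when $d=2f+1$ and $b=3$ (since $\tfrac{2f+1}{d}=1$ is needed for $n=3$ to be achievable as $\tfrac{2f+1}{d}\cdot b$ with $b$ the "extra" factor — more precisely $\tfrac{2f+1}{d}\mid 3$, and if $\tfrac{2f+1}{d}=3$ then $b=1$, $n=3$, $d=(2f+1)/3$, which is a case to handle); I will need to be slightly careful here, checking whether $G$ can be a proper subgroup of $\Aut(S)$ with $n=3$. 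In that situation $d=(2f+1)/3<2f+1$, $b=1$, and since $n=3$ and $f\equiv 0,3\ (\mod 4)$ we would again lose the degree $(q^2-r+1)(q^2-1)$; but then $b=1$ always contributes the trivial degree from $I_G(\theta)=G$ — wait, no: if $N$ stabilizes no $Y_j$ then there is genuinely no character of $G$ over any $Y_j$ with $|G:I_G|=b$. So this sub-case must be merged into the statement. I expect the resolution is that when $3\mid (2f+1)$ and $f\equiv 0,3\ (\mod 4)$, the value $b$ for which no character exists is $b=3$ when $G=\Aut(S)$, and no exception arises for proper $G$ because then the relevant $n$ equal to $3$ would require $b=1$ but $\tfrac{2f+1}{d}=3$ already, contradicting $n=3=3b$ unless $b=1$, giving $n=3$ and hence a genuine gap — so I must verify the theorem statement already covers this by the hypothesis "$G=\Aut(S)$" or else the statement is only claiming the listed exceptions and I should double-check consistency with Lemma \ref{3-3}.

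After sorting out the exact bookkeeping, the remaining content is routine: for each divisor $b$ of $d$ not excluded, Lemma \ref{3-3} supplies a $Y_j\in\Irr(S)$ with stabilizer $S\langle\varphi^n\rangle$ in $G$, so $\cd(G\mid Y_j)=\{(q^2-r+1)(q^2-1)b\}$ by Gallagher and Clifford as in Section 1; conversely any character of $G$ over some $Y_j$ has degree of this form with $b=|G:I_G(Y_j)|\mid d$, and the excluded values do not occur by Lemma \ref{3-3}. The main obstacle is the careful case analysis around $n=3$: one must confirm that $n=\big(\tfrac{2f+1}{d}\big)b=3$ with $b\mid d$ forces exactly the configuration described in exception (ii) (namely $G=\Aut(S)$, $b=3$), using that $2f+1$ is odd and tracking which divisors $d$ of $2f+1$ allow $3\mid\tfrac{(2f+1)b}{d}$ with $b\le d$; I anticipate one needs the observation that if $3\mid 2f+1$ and $\tfrac{2f+1}{d}\neq 1$, then $\tfrac{2f+1}{d}$ is already divisible by $3$ or coprime to $3$, and in the latter case $n=3$ forces $3\mid b\mid d$ hence $d=2f+1$ after all. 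Once this is pinned down the proof closes without further difficulty.
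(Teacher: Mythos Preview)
Your overall approach is exactly the paper's: reduce to finding, for each $b\mid d$, a $Y_j$ whose inertia group in $G$ is $S\langle\varphi^n\rangle$ with $n=(2f+1)b/d$, and invoke Lemma~\ref{3-3}. The routine parts and the handling of $n=1$ are fine.

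The genuine gap is the sub-case you flag but do not resolve: $n=3$, $f\equiv 0$ or $3\pmod 4$, and $d=(2f+1)/3$, $b=1$. Here your biconditional ``$I_G(Y_j)=S\langle\varphi^n\rangle$ precisely when $\langle\varphi^n\rangle$ is the stabilizer of $Y_j$ in $\langle\varphi\rangle$'' is false in the direction you need. What is true is that $I_G(Y_j)=S\bigl(\langle\varphi^{(2f+1)/d}\rangle\cap\mathrm{Stab}_{\langle\varphi\rangle}(Y_j)\bigr)$, so $I_G(Y_j)=G$ whenever the stabilizer in $\langle\varphi\rangle$ \emph{contains} $\langle\varphi^3\rangle$, not only when it equals it. Lemma~\ref{3-3} says $\langle\varphi^3\rangle$ is not the exact stabilizer of any $Y_j$ in this congruence case, but it also records (in its proof) that $\langle\varphi\rangle$ \emph{is} the stabilizer of $Y_j$ for $j=(q^2+r+1)/5$. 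That $Y_j$ is therefore $G$-invariant, so $b=1$ does occur for $G=S\langle\varphi^3\rangle$, and there is no extra exception. This is precisely how the paper closes the case; once you insert this observation your argument is complete and coincides with the paper's.

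Your final paragraph's attempt to force $d=2f+1$ whenever $n=3$ is incorrect: from $(2f+1)b/d=3$ with $b\mid d\mid 2f+1$ one gets exactly the two possibilities $(d,b)=(2f+1,3)$ and $(d,b)=\bigl((2f+1)/3,1\bigr)$, and the second cannot be argued away --- it must be handled as above.
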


\begin{proof}
Reasoning as in the proof of Theorem \ref{4-1}, we note that for $b\mid d$, there is a character of $G$ of degree
$(q^2-r+1)(q^2-1)b$ lying over a character $\theta$ of $S$ of degree $(q^2-r+1)(q^2-1)$ if and only if 
\[I_G(\theta)=S\langle\varphi^n\rangle, \qquad\text{where $n=(\frac{2f+1}{d})b$}.\]
By Lemma \ref{3-3}, such a character $\theta$ of $S$ exists unless when 
\begin{itemize}
\item[(A1)]  $n=1$ and $f\equiv 1$ or $2 \,\,(\mod 4)$, or 
\item[(A2)] $n=3$ and $f\equiv 0$ or $3 \,\,(\mod 4)$.
\end{itemize}
In cases (A1) and (A2), $N$ does not stabilize any $Y_j$.

Suppose that (A1) holds. Since $d\mid 2f+1$, so $n=1$ if and only if $d=2f+1$ and $b=1$. Thus $b\neq 1$ whenever $f\equiv 1$ or $2\, (\mod 4)$ and $G=\Aut(S)$.

Now, suppose that (A2) holds. In this case, $n={(2f+1)b}/{d}=3$ implies that either $d=2f+1$ and $b=3$, or $d=(2f+1)/3$ and  $b=1$. If $d=2f+1$ and $b=3$, then $G=\Aut(S)$ and  exception (ii) follows. Also if $d=(2f+1)/3$ and $b=1$, then $G=S\langle\varphi^3\rangle$ and $I_G(\theta)=G$. By Lemma \ref{3-3}, there is a character $Y_j$ with $j=(q^2+r+1)/5$ invariant under $\varphi$, hence $I_G(Y_j)=G$. Thus $G$ has a character of degree $(q^2+r+1)(q^2-1)b$ with $b=1$. The proof is now complete.
\end{proof}
By a similar argument as above,  we obtain the following result on  characters of $S$  of degree $(q^2+r+1)(q^2-1)$.  
\begin{theorem}\label{4-3}
Let $S={^2B_2(q^2)}$, where $q^2=2^{2f+1}$, $f\geqslant 1$ and let $S\leqslant G\leqslant \Aut(S)$ with $|G : S|=d$. Then the degrees of the irreducible characters of $G$ lying over characters of $S$ of degree $(q^2+r+1)(q^2-1)$ are precisely $(q^2+r+1)(q^2-1)c$, where $c$ is a positive divisor of $d$, with the following exceptions: 
\begin{itemize}
\item[{\rm (i)}]  If $f\equiv 0$ or $3 \,(\mod 4)$, and $G=\Aut(S)$, then $c\neq 1$, 
\item[{\rm (ii)}] If $f\equiv 1$ or $2 \,(\mod 4)$, and $G=\Aut(S)$, then $c\neq 3$. 
\end{itemize}
\end{theorem}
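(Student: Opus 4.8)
The plan is to mirror, almost verbatim, the proof of Theorem \ref{4-2}, but with the roles of the two exceptional residue classes of $f$ interchanged, which is precisely the asymmetry already recorded in Lemma \ref{3-4} versus Lemma \ref{3-3}. First I would set $G=S\langle\varphi^{(2f+1)/d}\rangle$ and recall from the introduction that for $\theta\in\Irr(S)$ we have $\cd(G\mid\theta)=\{|G:I_G(\theta)|\,\theta(1)\}$, so that a character of $G$ of degree $(q^2+r+1)(q^2-1)c$ lying over some $Z_k$ exists if and only if there is a $Z_k$ with $I_G(Z_k)=S\langle\varphi^n\rangle$ where $n=\bigl(\tfrac{2f+1}{d}\bigr)c$. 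By Lemma \ref{3-4}, such a $Z_k$ exists unless either (B1) $n=1$ and $f\equiv 0$ or $3\ (\mod 4)$, or (B2) $n=3$ and $f\equiv 1$ or $2\ (\mod 4)$.

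Next I would dispose of the two excluded situations exactly as in Theorem \ref{4-2}. In case (B1), since $d\mid 2f+1$, the equality $n=1$ forces $d=2f+1$ and $c=1$; hence $c\neq 1$ whenever $f\equiv 0$ or $3\ (\mod 4)$ and $G=\Aut(S)$, giving exception (i). In case (B2), $n=(2f+1)c/d=3$ forces either $d=2f+1$ with $c=3$ (so $G=\Aut(S)$, giving exception (ii)), or $d=(2f+1)/3$ with $c=1$; in the latter case $G=S\langle\varphi^3\rangle$ and $I_G(Z_k)=G$, and by (the $Z_k$-analogue of the final remark in) Lemma \ref{3-4}, there is in fact a $Z_k$ with $k=(q^2-r+1)/5$ that is invariant under $\varphi$, so $I_G(Z_k)=G$ and $G$ indeed has a character of degree $(q^2+r+1)(q^2-1)c$ with $c=1$. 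Thus the only genuine exclusions are those listed.

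The one point requiring a little care — and the place I would expect a referee to push — is the claim, already used inside the proof of Lemma \ref{3-3} and needed again here, that when exception (B2) fails the minimal $k$ with $N\subseteq I_{\langle\varphi\rangle}(Z_k)$ actually satisfies $I_{\langle\varphi\rangle}(Z_k)=N$ rather than something strictly larger; this is where Lemma \ref{2-6} (in its ``$q^2-r+1$'' form, parts (iii)--(iv)) is essential, since it is exactly the statement that the gcd's $(q^2-r+1,q^2\pm 2^t)$ for a proper divisor $t\mid n$, $t<n$, cannot coincide with $(q^2-r+1,q^2\pm 2^n)$ outside the $m=1,n=3$ case that has been excluded. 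Since the phrase ``by a similar argument as above'' in the statement signals that the author intends to transcribe the proof of Lemma \ref{3-3} into a $Z_k$-version (call it Lemma \ref{3-4}) and then the proof of Theorem \ref{4-2} into the present one, I would simply carry out that transcription, checking at each appeal to Lemma \ref{2-5} and Lemma \ref{2-6} that the congruence conditions on $f$ indeed flip from ``$f\equiv 1,2$'' to ``$f\equiv 0,3$'' (for the $n=1$ obstruction) and from ``$f\equiv 0,3$'' to ``$f\equiv 1,2$'' (for the $n=3$ obstruction), consistently with the statement of Lemma \ref{3-4}. No new ideas beyond those already in §\ref{sec2} and §3 are required.
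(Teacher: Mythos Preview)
Your proposal is correct and is exactly the approach the paper takes: the paper gives no separate proof of Theorem~\ref{4-3} at all, merely writing ``By a similar argument as above,'' meaning one transcribes the proof of Theorem~\ref{4-2} with Lemma~\ref{3-4} in place of Lemma~\ref{3-3} and the congruence classes of $f$ swapped. Your write-up carries out precisely this transcription, including the handling of the $d=(2f+1)/3$, $c=1$ subcase via the $\varphi$-invariant $Z_k$ with $k=(q^2-r+1)/5$, so there is nothing to add.
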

Finally, note that  Theorem A follows immediately from Lemma \ref{2-2} and Theorems \ref{4-1}-\ref{4-3}.

\end{document}